\numberwithin{equation}{section}
\theoremstyle{plain}
\newtheorem{theorem}[subsection]{Theorem}
\newtheorem{lemma}[subsection]{Lemma}
\newtheorem{prop}[subsection]{Proposition}
\theoremstyle{definition}
\newtheorem{defn}[subsection]{Definition}
\newtheorem{remark}[subsection]{Remark}
\def\FF{\mathbb{F}}
\def\GG{\mathbb{G}}
\def\PP{\mathbb{P}}
\def\QQ{\mathbb{Q}}
\newcommand\cA{\mathcal{A}}
\newcommand\cB{\mathcal{B}}
\newcommand\cE{\mathcal{E}}
\newcommand\cF{\mathcal{F}}
\newcommand\cG{\mathcal{G}}
\newcommand\cH{\mathcal{H}}
\newcommand\cM{\mathcal{M}}
\newcommand\cO{\mathcal{O}}
\newcommand\cR{\mathcal{R}}
\newcommand\cV{\mathcal{V}}
\newcommand\cW{\mathcal{W}}
\newcommand\cZ{\mathcal{Z}}
\def\bL{\mathbf{L}}
\def\bR{\mathbf{R}}
\newcommand{\sG}{\mathscr{G}}
\newcommand{\Bun}{\textup{Bun}}
\newcommand\ev{\textup{ev}}
\newcommand{\Fl}{\textup{Fl}}
\newcommand\Forg{\textup{Forg}}
\newcommand\Fr{\textup{Fr}}
\newcommand\Frac{\textup{Frac}}
\newcommand{\Hk}{\textup{Hk}}
\newcommand\id{\textup{id}}
\newcommand\pr{\textup{pr}}
\newcommand\Proj{\textup{Proj}}
\newcommand\Rep{\textup{Rep}}
\newcommand\Sht{\textup{Sht}}
\newcommand\Spec{\textup{Spec}\ }
\newcommand\Sym{\textup{Sym}}
\newcommand{\univ}{\textup{univ}}
\newcommand\Aut{\textup{Aut}}
\newcommand\uAut{\underline{\Aut}}
\newcommand\GL{\textup{GL}}
\newcommand\PGL{\textup{PGL}}
\newcommand\Ug{\textup{U}}
\newcommand{\Gm}{\GG_m}
\newcommand{\incl}{\hookrightarrow}
\newcommand{\isom}{\stackrel{\sim}{\to}}
\newcommand{\surj}{\twoheadrightarrow}
\newcommand{\twtimes}[1]{\stackrel{#1}{\times}}
\newcommand{\wh}[1]{\widehat{#1}}
\newcommand\quash[1]{}
\newcommand\un{\underline}
\newcommand{\bu}{\bullet}
\newcommand{\ov}{\overline}
\newcommand{\bs}{\backslash}
\newcommand\xr{\xrightarrow}
\newcommand\ot{\otimes}
\newcommand{\sslash}{\mathbin{/\mkern-6mu/}}
\newcommand\upH{\textup{H}}
\renewcommand\a\alpha
\renewcommand\b\beta
\newcommand\g\gamma
\newcommand\G\Gamma
\renewcommand\d\delta
\renewcommand{\th}{\theta}
\newcommand{\ph}{\varphi}
\newcommand{\io}{\iota}
\newcommand{\s}{\sigma}
\renewcommand{\t}{\tau}
\newcommand{\y}{\eta}
\renewcommand{\l}{\lambda}
\newcommand{\om}{\omega}
\newcommand{\Om}{\Omega}
\renewcommand\c{\circ}
\newcommand\To{\Rightarrow}
\newcommand\bt{\boxtimes}
\title{Special cycles for Shtukas are closed}
\dedicatory{To Dick Gross with admiration and gratitude}
\author{Zhiwei Yun}
\thanks{Supported by the Simons Investigatorship and the Packard Fellowship.}
\address{Department of Mathematics, Massachusetts Institute of Technology, 77 Massachusetts Ave, Cambridge, MA 02139}
\email{zyun@mit.edu}
\date{}
\subjclass[2020]{14F35, 11F55}
\keywords{}
\begin{document}

\begin{abstract}
In this paper we give a different proof of a theorem of Paul Breutmann:  for a Bruhat-Tits group scheme $\cH$ over a smooth projective curve $X$ and a closed embedding into another smooth affine group scheme $\cG$, the induced map on the moduli of Shtukas $\Sht^{r}_{\cH}\to \Sht^{r}_{\cG}$ is schematic, finite and unramified. This result enables one to define special cycles on the moduli stack of Shtukas.
\end{abstract}

\maketitle

\section{Introduction}

\subsection{Motivation}\label{ss:mot}

Special cycles on Shimura varieties provide an important link between geometric invariants (such as intersection numbers or heights) and analytic invariants (such as special values or derivatives of $L$-functions).  For a suitable reductive group $G$ over $\QQ$, if we think of (the complex points of) $G$-Shimura varieties as the moduli space of $G$-Hodge structures, then special cycles parametrize $G$-Hodge structures with extra data such as a collection of Hodge tensors, or a reduction to a smaller group $H$.

Fix a smooth projective geometrically connected curve $X$ over $\FF_{q}$. For the function field $F=\FF_{q}(X)$, the role of Shimura varieties is played by the moduli stack of Drinfeld Shtukas over $X$ with $G$-structures. The moduli of $G$-Shtukas $\Sht^{r}_{G}$ has an extra degree of freedom, the number of legs $r$ (equal to $1$ for Shimura varieties).

One can similarly define special cycles on such moduli stacks as the moduli stack of $G$-Shtukas with extra structure such as a reduction to a smaller group $H$ or a Frobenius-invariant section of an associated bundle.  Here are some examples of special cycles for function fields:
\begin{enumerate}
\item Let $\nu: X'\to X$ be an  \'etale double cover, $H=(R_{X'/X}\Gm)/\Gm$ and $G=\PGL_{2}$. In this case, $\Sht^{r}_{H}\to \Sht^{r}_{G}$ are called {\em Heegner-Drinfeld cycles}. Their intersection-theoretic properties were studied in \cite{YZ1,YZ2,HS}, and were given by the $r$th central derivative of the automorphic $L$-functions for $\PGL_{2}$.


\item Let $X'\to X$ be a double cover and $n\ge1$. Consider $\Ug_{n}$ a unitary group scheme over $X$ that splits over $X'$.  Letting $H=\Ug_{n}$ and $G=\Ug_{n}\times\Ug_{n+1}$, we have a map $\Sht^{r}_{H}\to \Sht^{r}_{G}$ coming from  the diagonal embedding $H\incl G$.  The image of this map is the function field analogue of Gan-Gross-Prasad cycles in the arithmetic GGP conjecture \cite{GGP}. 

\item Let $G=\Ug_{n}$ using a double cover  as above, and let $\cE$ be a vector bundle of rank $m\le n$ over $X'$. In \cite{FYZ, FYZ2} we define a special cycle $\cZ^{r}_{\cE}$ on $\Sht^{r}_{G}$ parametrizing unitary Shtukas with a Frobenius invariant map from $\cE$. These are function field analogues of the special cycles defined by Kudla and Rapoport \cite{KR1,KR2}. In the case $m=n$, they are $0$-cycles and their degrees are given by the Fourier coefficients of the $r$th central derivative of Siegel-Eisenstein series \cite{FYZ}.
\end{enumerate}

A basic question, before one can even call special cycles ``cycles'', is to show that their image in $\Sht^{r}_{G}$ is closed. In the above examples, the closedness is easy to show using special features of the situation (for example in Example (1) $\Sht^{r}_{H}$ is itself proper). We ask more generally if $\cH\incl \cG$ is a closed embedding  of smooth affine group schemes over $X$, is the induced map
$$\th^{r}: \Sht^{r}_{\cH}\to\Sht^{r}_{\cG}$$
a finite map? The non-obvious part is the properness of the map.
 
 \subsection{Result and proof outline}
In \cite[Theorem 3.26]{Br}, P. Breutmann proves that when $\cH$ is a Bruhat-Tits group scheme (i.e. connected reductive generically with parahoric level structures), $\th^{r}$ is schematic,  finite and unramified. This allows one to define special cycles on $\Sht^{r}_{\cG}$ as the direct image of the fundamental class of $\Sht^{r}_{\cH}$.

In this paper we give a different (and hopefully more streamlined) proof of this fact. In the publicly available version of \cite{Br}, only a weaker statement was proved ($\th^{r}$ has the same properties after completion along a fixed leg). Our original purpose was to improve that result over all of $X^{r}$. After this paper was written, we learned from Urs Hartl that a revised version of \cite{Br} (not posted on the arXiv yet) contains the statement that $\th^{r}$ is schematic,  finite and unramified.

Our proof consists of two parts. In the first part we construct a closed embedding of $\Bun_{\cH}$ to the moduli stack $\Bun_{\cG}(\cW)^{\c}$ of $\cG$-torsors together with a nonzero section of the vector bundle associated to a certain $\cG$-representation $\cW$. The argument is easy when $\cH$ is reductive everywhere, but when $\cH$ has parahoric level structures it relies on a deep result of Ansch\"utz \cite{An} (as does Breutmann's argument).

In the second part we pass to Shtuka versions of the moduli stacks considered in the first part. The main observation is that the forgetful map $\Sht^{r}_{\cG}(\cW)^{\c}\to \Sht^{r}_{\cG}$ is proper. This argument was used in \cite{FYZ} to show that Kudla-Rapoport special cycles are closed.

We hope Breutmann's result and this short paper will provide a starting point for the study of more examples of special cycles on the moduli of Shtukas.  We refer to \cite{Y-ICM} for a survey of what was known and what is expected about special cycles  (called Heegner-Drinfeld cycles in {\em loc.cit.}) .


\subsection*{Acknowledgement} 
I would like to thank Brian Conrad for help on reductive group schemes, and Urs Hartl for sending me the revised version of Breutmann's paper \cite{Br}. I would also like to thank the referee for helpful suggestions.

It is a pleasure to dedicate this short paper to Dick Gross. His original results and conjectures on Heegner points, diagonal cycles on product of modular curves, and diagonal cycles on unitary Shimura varieties are our main inspiration for studying special cycles over function fields. At a personal level, I would like to thank Dick for his guidance and encouragement over the last 12 years.

\section{Definitions and statements}
Let $X$ be a smooth, projective and geometrically connected curve over $k=\FF_{q}$. Let $\cG$ be a smooth affine group scheme over $X$.

\subsection{Moduli of bundles}
Let $\Bun_{\cG}$ denote the moduli stack of $\cG$-torsors over $X$. More precisely, for any affine scheme $S$, $\Bun_{\cG}(S)$ is the groupoid of $\cG$-torsors over $X\times S$.

Let $\Rep(\cG)$ be the category of (finite rank) vector bundles over $X$ with a linear action of $\cG$. Morphisms in  $\Rep(\cG)$  are $\cG$-equivariant linear maps.

For  $\cV\in \Rep(\cG)$ and a $\cG$-torsor $\cE$ over $X$, let $\cV_{\cE}$ be the associated vector bundle $\cE\times^{\cG}_{X}\cV$, the quotient of $\cE\times_{X}\cV$ by the diagonal action of $\cG$.

Let $r\ge0$ be an integer.  Let $\Hk^{r}_{\cG}$ be the ind-stack whose $S$-points ($S$ affine) classify tuples $(x_{\bu}=\{x_{i}\}_{1\le i\le r}, \cE_{\bu}=\{\cE_{i}\}_{0\le i\le r}, f_{\bu}=\{f_{i}\}_{1\le i\le r})$, where
\begin{itemize}
\item $x_{i}: S\to X$ ($1\le i\le r$) are morphisms called {\em legs};
\item $\cE_{i}$ ($0\le i\le r$) are $\cG$-torsors over $X\times S$;
\item $f_{i}: \cE_{i-1}|_{X\times S\bs\G(x_{i})}\isom \cE_{i}|_{X\times S\bs\G(x_{i})}$ ($1\le i\le r$) are isomorphisms of $\cG$-torsors. 
\end{itemize}

One can specifying the bounds $\mu=(\mu_{1},\cdots, \mu_{r})$ on the zeros and poles of $(f_{1},\cdots,f_{r})$ to get algebraic substacks $\Hk^{r,\le \mu}_{\cG}$ of $\Hk^{r}_{\cG}$, so that $\Hk^{r}_{\cG}$ is the inductive limit of $\Hk^{r,\le \mu}_{\cG}$, with transition maps closed embeddings.  Our result will not be sensitive to the precise definition of these bounds, therefore we will always work with the ind-stack $\Hk^{r}_{\cG}$. When $\cG$ is split, one can specify a bound $\mu$  to be a tuple $(\mu_{1},\cdots, \mu_{r})$ of dominant coweights of $\cG$. We refer to the paper of Arasteh Rad and Hartl \cite{AH} for more precise discussion about bounds.

\subsection{Moduli of Shtukas}  \label{ss:def Sht}
Shtukas for $\GL_{n}$ are introduced by Drinfeld \cite{Dr}. For split reductive groups $G$,  $G$-Shtukas are  defined by Varshavsky \cite{Var}. The general notion of $\cG$-Shtukas for group schemes $\cG$ are introduced and studied by Arasteh Rad and Hartl \cite{AH}.

Let $r\ge0$ be an integer. We define $\Sht_{\cG}^{r}$ by the Cartesian diagram
\begin{equation}\label{def Sht}
\xymatrix{\Sht^{r}_{\cG}\ar[d] \ar[r] & \Hk^{r}_{\cG}\ar[d]^{(p_{0},p_{r})}\\
\Bun_{\cG}\ar[r]^-{(\id, \Fr)} &  \Bun_{\cG}\times \Bun_{\cG}
}
\end{equation}
Here $p_{i}: \Hk^{r}_{\cG}\to \Bun_{\cG}$ is the forgetful map recording the $\cG$-torsor $\cE_{i}$, for $0\le i\le r$.

%
%

Thus an $S$-point of $\Sht^{r}_{\cG}$ is  a tuple $(x_{\bu}, \cE_{\bu}, f_{\bu}, \io)$, where
\begin{itemize}
\item $(x_{\bu}, \cE_{\bu}, f_{\bu})\in  \Hk^{r}_{\cG}(S)$;
\item $\io:\cE_{r}\isom {}^{\tau}\cE_{0}$ is an isomorphism of $\cG$-torsors over $X\times S$. Here, ${}^{\tau}\cE_{0}=(\id_{X}\times\Fr_{S})^{*}\cE_{0}$.  
\end{itemize}
We call such a tuple $(x_{\bu}, \cE_{\bu}, f_{\bu}, \io)$ an $S$-family of {\em $\cG$-Shtukas with $r$ legs}. 


Using a bound $\mu$ and $\Hk^{r,\le \mu}_{\cG}$ instead of $\Hk^{r}_{\cG}$, a similar diagram as \eqref{def Sht} defines  $\Sht^{r,\le \mu}_{\cG}$ which are algebraic stacks locally of finite type over $k$. We see that $\Sht^{r}_{\cG}$ is the inductive limit of $\Sht^{r,\le \mu}_{\cG}$ with transition maps closed embeddings. 

\subsection{Special cycles} Let $\cH$ and $\cG$ be smooth affine group schemes over $X$, and $\cH\incl \cG$ be a closed embedding.  

%
%
%

The inclusion  $\cH\incl \cG$ induces a map $\ph: \Bun_{\cH}\to \Bun_{\cG}$ sending a $\cH$-torsor $\cF$ to the induced $\cG$-torsor $\cE=\cF\twtimes{\cH}_{X}\cG$. The same construction gives a map of ind-stacks $h^{r}: \Hk^{r}_{\cH}\to \Hk^{r}_{\cG}$.  Using the Cartesian diagram \eqref{def Sht}, the map $h$ induces a map of ind-stacks
\begin{equation*}
\th^{r}: \Sht^{r}_{\cH}\to \Sht^{r}_{\cG}.
\end{equation*}
For any bound $\l$ of modifications for $\cH$-torsors, $\th^{r}$ sends $\Sht^{r,\le \l}_{\cH}$ to $\Sht^{r,\le \mu}_{\cG}$ for some bound $\mu$ of modifications for $\cG$-torsors. Conversely, for any bound $\mu$ of Hecke modifications for $\cG$-torsors, its preimage $\th^{r,-1}(\Sht^{r,\le \mu}_{\cG})$ is contained in a finite union of $\Sht^{r,\le\l'}_{\cH}$  for bounds $\l'$ of Hecke modifications for $\cH$-torsors.

\subsection{Bruhat-Tits group scheme}\label{ss:BT} Let $\cH$ be a smooth affine group scheme over $X$. We say $\cH$ is a {\em Bruhat-Tits group scheme} if
\begin{enumerate}
\item The generic fiber of $\cH$ is a connected reductive group over the function field $F=k(X)$. This implies that there is an open dense subset $U\subset X$ over which $\cH$ is a connected reductive group scheme.
\item For each $x\in X\bs U$, writing $\cO_{x}$ for the completed local ring at $x$ and $F_{x}$ its fraction field, the group scheme $\cH|_{\Spec \cO_{x}}$ is a parahoric subgroup of $\cH|_{\Spec F_{x}}$.\end{enumerate}

In this paper we will give a different proof of the following theorem of Breutmann.

\begin{theorem}[Breutmann {\cite[Theorem 3.26]{Br}}]\label{th:main} Let $\cH\incl\cG$ be a closed embedding of smooth affine group schemes over $X$. Assume $\cH$ is a Bruhat-Tits group scheme over $X$. Then the map $\th^{r}: \Sht^{r}_{\cH}\to \Sht^{r}_{\cG}$ is schematic, finite and unramified.
\end{theorem}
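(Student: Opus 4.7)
The plan, following the introduction's outline, is to factor $\th^r$ as a closed embedding into an auxiliary moduli of $\cG$-Shtukas equipped with a section, followed by a proper forgetful map. The first task is to produce a $\cG$-representation $\cW$ (a vector bundle on $X$ carrying a linear $\cG$-action) and a section $s_0 \in H^0(X,\cW)$ whose pointwise stabilizer in $\cG$ is exactly $\cH$. On the reductive locus $U \subset X$ this is classical: pick a faithful $\cG|_U$-representation and use Chevalley's theorem to realize $\cH|_U$ as the stabilizer of a line, then twist to get an honest vector. At each bad point $x \in X \setminus U$, the theorem of Ansch\"utz \cite{An} produces a local $\cG|_{\Spec\cO_x}$-representation in which the parahoric $\cH|_{\Spec\cO_x}$ is cut out as the scheme-theoretic stabilizer of a specific vector. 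The local parahoric and generic reductive models share the same generic fiber, so they glue along the punctured formal disks to give a global pair $(\cW,s_0)$. The resulting morphism
\[
\alpha:\Bun_{\cH}\to\Bun_{\cG}(\cW)^{\c},\qquad \cF\mapsto(\cF\times^{\cH}\cG,\,\tilde s_0),
\]
is then a closed embedding.

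Next I would bootstrap to Shtukas. Define $\Hk^r_{\cG}(\cW)^{\c}$ and $\Sht^r_{\cG}(\cW)^{\c}$ by repeating the Cartesian square \eqref{def Sht} with an extra nowhere-vanishing section of $\cW_{\cE_0}$ attached and transported through the Hecke modifications. Since \eqref{def Sht} is functorial in its inputs and $\alpha$ (together with its Hecke-moduli lift) is a closed embedding, base change yields a closed embedding $\wt\alpha:\Sht^r_{\cH}\hookrightarrow\Sht^r_{\cG}(\cW)^{\c}$. The key point is then that the forgetful map $\pi:\Sht^r_{\cG}(\cW)^{\c}\to\Sht^r_{\cG}$ is schematic and finite, as in \cite{FYZ}: over a $\cG$-Shtuka, its fibers classify sections of the coherent sheaf $\cW_{\cE_0}$ on the proper curve $X$ that are fixed by the Frobenius-semilinear operator built from $\io$ and the Hecke modifications, and such fixed loci form a finite \'etale scheme over the base by Lang's theorem (removing the zero section preserves this, since zero is a clopen component of any finite \'etale cover).

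Composing, $\th^r=\pi\circ\wt\alpha$ is a closed embedding followed by a schematic, finite, unramified map, hence itself schematic, finite, and unramified. The main obstacle is Step 1 at the bad points: one must arrange that the \emph{scheme-theoretic} (not merely fiberwise set-theoretic) stabilizer of $s_0$ in $\cG$ equals $\cH$ as a flat subgroup scheme over $X$, and that the Ansch\"utz local models glue coherently to the reductive model on $U$. Once this input is available, Steps 2 and 3 are formal consequences of the Cartesian-square definition of Shtukas and the finite \'etale nature of Frobenius fixed points on coherent sheaves over a proper base.
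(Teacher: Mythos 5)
Your overall architecture (closed embedding into a moduli of $\cG$-Shtukas with a nonvanishing section, followed by a proper forgetful map) is the same as the paper's, but Step 1 contains a genuine gap at exactly the point where the real work lies. First, the appeal to Ansch\"utz is a misattribution: his theorem is about extending (equivalently, trivializing) torsors on punctured two-dimensional local schemes, and in the paper it is used to trivialize $\cH$-torsors on $D_{x,\cO_{C}}\setminus\{x\}$ inside a valuative criterion; it does not produce a local $\cG$-representation in which a parahoric is the scheme-theoretic stabilizer of a vector, so your construction of $(\cW,s_{0})$ at the bad points has no supplied proof. Second, and more importantly, even if you had such a pair $(\cW,s_{0})$ with $\cH=\Stab_{\cG}(s_{0})$ scheme-theoretically, the conclusion ``$\alpha:\Bun_{\cH}\to\Bun_{\cG}(\cW)^{\c}$ is then a closed embedding'' does not follow formally: the stabilizer property only makes the orbit map $\cG/\cH\to\cW$ a monomorphism, hence $\alpha$ a monomorphism. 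Over the points of $X\setminus U$ the orbit of $s_{0}$ is not closed in $\cW$ (closedness of $b_{\cV}$ is only available over $U$), so closedness of $\alpha$ is precisely the nontrivial properness statement that the paper proves in Proposition \ref{l:bV par}: one passes to the closure $\cZ$ of the generic orbit, checks the valuative criterion over $\cO_{C}$, uses Ansch\"utz to trivialize the $\cH$-torsor on the punctured disc, Beauville--Laszlo gluing, and the ind-properness of the affine flag varieties $\Fl_{\cH,x}$ for Bruhat--Tits group schemes, plus the closedness of $\Fl_{\cH,x}\to\Fl_{\cG,x}$ to see the fibers are single reduced points. Your plan treats this as automatic, which is the crux it cannot skip.

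Two smaller inaccuracies in Step 3: the forgetful map $\Sht^{r}_{\cG}(\cW)^{\c}\to\Sht^{r}_{\cG}$ is finite and unramified but not \'etale in general, and ``Lang's theorem'' does not apply directly because the pushforward of $\cW_{\cE_{0}}$ to the base is only a coherent sheaf (its fibers jump), not a vector bundle, so the Frobenius-fixed locus is not a priori finite \'etale. The actual argument (Proposition \ref{p:sect fin}, following \cite{FYZ}) establishes properness via the projectivization $\PP\Sht^{r}_{\cG}(\cW)$, identifies $\Sht^{r}_{\cG}(\cW)^{\c}\to\PP\Sht^{r}_{\cG}(\cW)$ as an $\FF_{q}^{\times}$-torsor, and proves unramifiedness by computing geometric fibers as finite-dimensional $\FF_{q}$-vector spaces; since you cite \cite{FYZ} this step is repairable, but as written the justification is not correct. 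The passage from $\Bun$ to $\Hk$ and $\Sht$ by base change is fine in outline, though note the paper still needs a small argument (Lemma \ref{l:bV Hk}) that the compatibility of sections forces the Hecke modification of $\cG$-torsors to come from one of $\cH$-torsors, using separatedness of $\cE/\cH$ and density of $U$.
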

Concretely, this means that for any bound $\mu$ of modifications for $\cG$-torsors, the restriction  $\th^{r,-1}(\Sht^{r,\le \mu}_{\cG})\to \Sht^{r,\le \mu}_{\cG}$, as a map of algebraic stacks, is schematic, finite and unramified.

Following this theorem, one can define special cycles as follows. Choose bounds $\l$ and $\mu$ for $\cH$ and $\cG$ such that $\th^{r}$ restricts to a map $\Sht^{r,\le \l}_{\cH}\to \Sht^{r,\le \mu}_{\cG}$. Assume $\Sht^{r,\le \l}_{\cH}$ has a fundamental cycle $[\Sht^{r,\le \l}_{\cH}]$ (for example, if $\Sht^{r,\le \l}_{\cH}$ is smooth). Then we get a cycle $\th^{r}_{\mu,*}[\Sht^{r,\le \l}_{\cH}]$ on $\Sht^{r,\le \mu}_{\cG}$ by pushing forward along $\th^{r}$. We would like to call such algebraic cycles on $\Sht^{r,\le \mu}_{\cG}$ {\em special cycles}.

\subsection{Pseudo-homomorphisms} We give a slight generalization of the above theorem to allow more flexibility. 

A subtlely is that maps between the moduli stacks of bundles $\Bun_{\cH}\to \Bun_{\cG}$ does not necessarily come from a homomorphism $\cH\to \cG$. This can already be seen in the first example in \S\ref{ss:mot}.

\begin{defn} Let $\cH$ and $\cG$ be group schemes over $X$. A pseudo-homomorphism $\cE_0: \cH\To\cG$ is a right $\cG$-torsor $\cE_0$ over $X$ with a commuting (left) action of $\cH$. 
\end{defn}


A pseudo-homomorphism  $\cE_0: \cH\To \cG$ induces a homomorphism of group schemes $i_{\cE_0}: \cH\to \uAut_{\cG}(\cE_0)$  (the latter being an inner form of $\cG$ over $X$). Since $\Bun_{\cG}$ is unchanged if $\cG$ is replaced by an inner form, $i_{\cE_{0}}$ induces a map $\ph_{\cE_{0}}: \Bun_{\cH}\to \Bun_{\cG}$. More precisely, for $\cF\in \Bun_{\cH}(S)$, $\ph_{\cE_0}(\cF)=\cF\twtimes{\cH}_{X}\cE_0$ which is a right $\cG$-torsor over $X\times S$.  

\begin{defn}
We say a pseudo-homomorphism  $\cE_0: \cH\To \cG$ is a {\em pseudo-closed embedding} if the induced map $i_{\cE_0}: \cH\to \uAut_{\cG}(\cE_0)$ is a closed embedding.
\end{defn}

\begin{remark}
Let $B\cG$ be the classifying stack of $\cG$ over $X$. Namely, for any test scheme $S$ with a map $x:S\to X$, $B\cG(S)$ is the groupoid of $x^{*}\cG$-torsors over $S$. This is a gerbe over $X$. Then a pseudo-homomorphism $\cE_0: \cH\To\cG$ is the same datum as a morphism of stacks $\b: B\cH\to B\cG$. Namely, given $\b$, the composition $X\to B\cH\xr{\b}B\cG$ gives a right $\cG$-torsor $\cE_0$ over $X$ with a commuting action of $\cH$. Conversely, given a pseudo-homomorphism $\cE_0: \cH\To\cG$, for any scheme $S$ with $x:  S\to X$ and any $x^{*}\cH$-torsor $\cF_{x}$ over $S$, we define $\b_S\in B\cG(S)$ (a $x^{*}\cG$-torsor over $S$) to be $\cF_{x}\times^{x^{*}\cH}_{S}x^{*}\cE_0$.

In some cases, it is more natural to define the moduli stack of torsors not starting from a group scheme $\cG$ over $X$, but starting from a gerbe $\sG$ over $X$. See \cite[\S3.1]{FYZ2}.
\end{remark}

A pseudo-homomorphism $\cE_{0}: \cH\To\cG$  similarly induces a map $h^{r}_{\cE_{0}}: \Hk^{r}_{\cH}\to \Hk^{r}_{\cG}$ covering $\ph_{\cE_{0}}: \Bun_{\cH}\to \Bun_{\cG}$ via the projections $p_{i}: \Hk^{r}_{\cH}\to \Bun_{\cH}$ and $p_{i}: \Hk^{r}_{\cG}\to \Bun_{\cG}$, $0\le i\le r$. Therefore it also induces a map of ind-stacks
\begin{equation*}
\th^{r}_{\cE_{0}}: \Sht^{r}_{\cH}\to \Sht^{r}_{\cG}.
\end{equation*}

\begin{theorem}\label{th:Sht gp sch} Let $\cE_{0}: \cH\To\cG$ be a pseudo-closed embedding of smooth affine group schemes  over $X$. Assume $\cH$ is a Bruhat-Tits group scheme over $X$. Then the map $\th^{r}_{\cE_{0}}: \Sht^{r}_{\cH}\to \Sht^{r}_{\cG}$ is schematic, finite and unramified.
\end{theorem}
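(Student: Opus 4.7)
The plan follows the two-part strategy outlined in \S1.2 and uses Theorem~\ref{th:main} as the key input. First I would reduce Theorem~\ref{th:Sht gp sch} to Theorem~\ref{th:main} by replacing $\cG$ with its inner form $\cG':=\uAut_{\cG}(\cE_{0})$. The pseudo-closed embedding $\cE_{0}:\cH\To\cG$ yields an honest closed embedding $i_{\cE_{0}}: \cH\incl \cG'$ by definition, and the contraction $\cE\mapsto \cE\twtimes{\cG'}_{X}\cE_{0}$ induces a canonical isomorphism $\Sht^{r}_{\cG'}\isom \Sht^{r}_{\cG}$ intertwining $\th^{r}_{\cE_{0}}$ with the map $\Sht^{r}_{\cH}\to\Sht^{r}_{\cG'}$ attached to $i_{\cE_{0}}$. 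Thus it suffices to prove Theorem~\ref{th:main}.

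For an honest closed embedding $\cH\incl\cG$, the core construction is to produce a $\cG$-representation $\cW\in\Rep(\cG)$ together with a global $\cH$-invariant section $s_{0}\in\cohog{0}{X,\cW}^{\cH}$ whose scheme-theoretic stabilizer in $\cG$ is exactly $\cH$. Given such a pair, the assignment $\cF\mapsto(\cF\twtimes{\cH}_{X}\cG,\, s_{0})$ defines a closed embedding
\begin{equation*}
\Bun_{\cH}\incl \Bun_{\cG}(\cW)^{\c}:=\{(\cE,s)\mid \cE\in\Bun_{\cG},\ 0\ne s\in\cohog{0}{X,\cW_{\cE}}\}.
\end{equation*}
Away from the parahoric locus of $\cH$, producing such $(\cW,s_{0})$ is a family version of Chevalley's theorem applied to the reductive group scheme $\cH\incl\cG$ over the open set $U$. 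At parahoric points Chevalley is unavailable, and this is where Ansch\"utz's tannakian characterization of parahoric group schemes \cite{An} enters: it ensures that $\cH$ can be recovered inside its generic fiber as the stabilizer of an appropriate lattice-theoretic datum, which one packages into a $\cG$-representation and a global section over $X$. Globalizing the construction so that a single pair $(\cW,s_{0})$ works uniformly over all of $X$ is, I expect, the main technical obstacle.

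Passing to Shtukas via a Cartesian diagram analogous to \eqref{def Sht} defines $\Sht^{r}_{\cG}(\cW)^{\c}$, and base change gives a closed embedding $\Sht^{r}_{\cH}\incl \Sht^{r}_{\cG}(\cW)^{\c}$. It then remains to show that the forgetful map $\pi: \Sht^{r}_{\cG}(\cW)^{\c}\to \Sht^{r}_{\cG}$ is proper, following the mechanism of \cite{FYZ}. A fiber of $\pi$ over $(x_{\bu},\cE_{\bu},f_{\bu},\io)$ parametrizes nonzero sections of $\cW_{\cE_{0}}$ that are transported by $f_{\bu}$ and are Frobenius-invariant via $\io$; such sections satisfy a Frobenius equation inside the finite-dimensional space $\cohog{0}{X,\cW_{\cE_{0}}}$ and hence form a finite set, giving quasi-finiteness. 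The valuative criterion is verified because Frobenius-invariance forces an a priori meromorphic section over the generic point of a DVR to extend integrally across the closed point. Consequently $\th^{r}$ factors as a closed embedding followed by a proper quasi-finite map, so $\th^{r}$ itself is proper and quasi-finite, hence finite. Unramifiedness follows from a tangent-space calculation: the tangent complex of $\Sht^{r}_{\cG}$ at a point is controlled by cohomology of $\Ad(\cE_{0})$, and the closed embedding $\Lie\cH\incl\Lie\cG$ of Lie algebra bundles induces an injection on the relevant tangent complexes.
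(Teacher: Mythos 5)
Your first paragraph---replacing $\cG$ by the inner form $\uAut_{\cG}(\cE_{0})$, noting that contraction with $\cE_{0}$ identifies $\Sht^{r}_{\uAut_{\cG}(\cE_{0})}$ with $\Sht^{r}_{\cG}$ compatibly with the two maps, and thereby reducing to Theorem \ref{th:main}---is exactly the paper's proof of this theorem. The remaining paragraphs re-sketch the proof of Theorem \ref{th:main} itself, which the paper proves separately and which you may simply cite; the only caveats there are that Ansch\"utz's input is the triviality (extension) of $\cH$-torsors on punctured discs rather than a Tannakian stabilizer characterization, and that the paper gets properness of $\Forg_{\cW}$ via a projectivized moduli of sections together with an $\FF_{q}^{\times}$-torsor, and unramifiedness from finiteness and reducedness of its fibers rather than a tangent-complex computation.
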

This theorem directly follows from Theorem \ref{th:main}: we may replace $\cG$ with its inner form $ \uAut_{\cG}(\cE_0)$ without changing the moduli stack $\Sht^{r}_{\cG}$, and reduce to the situation of a closed embedding $\cH\incl\uAut_{\cG}(\cE_0)$.

The rest of the paper is devoted to the proof of Theorem \ref{th:main}.

\section{Reductions of bundles}
In this section, $k$ is an arbitrary field.   Let $X$ be a smooth, projective and geometrically connected curve over $k$. Let $\cH\incl \cG$ be a closed embedding of smooth affine group schemes over $X$.

\begin{remark}\label{r:enough V}
A typical way to construct objects $\cV\in \Rep(\cG)$ is as follows. Let $\cG_{\y}$ be the generic fiber of $\cG$, a reductive group over $F=k(X)$. Now the coordinate ring $\cO_{\cG_{\y}}=F[\cG_{\y}]$ is a union of finite-dimensional $F$-subspaces $V_{i}$ stable under the right translation of $\cG_{\y}$. Let $\cV_{i}$ be the saturation of $V_{i}$ in $\cO_{\cG}$ (which is a union of vector bundles on $X$), then $\cV_{i}\in\Rep(\cG)$ with the $\cG$-action inherited from the right translation action on $\cO_{\cG}$, and $\cO_{\cG}$ is the union of $\cV_{i}$.

More generally, suppose $\cR$ is a quasi-coherent sheaf on $X$ with a linear $\cG$-action (i.e., an $\cO_{\cG}$-comodule), and $\cR$ is a union of vector bundles, then $\cR$ is a union of $\cG$-stable vector bundles (i.e., objects in $\Rep(\cG)$). This can be checked by the saturation as above.
\end{remark}

The stack quotient $\cG/\cH$ is an algebraic space. Let $\cV\in \Rep(\cG)$. Let $\cV^{\cH}$ be the subbundle of $\cH$-invariants. The action of $\cG$ on $\cV$ moves the natural embedding $\cV^{\cH}\incl \cV$ and gives a morphism $\cG\to (\cV^{\cH})^{\vee}\ot_{\cO_{X}}\cV$ that is right invariant under $\cH$. It therefore induces a unique map of algebraic spaces over $X$
\begin{equation}\label{bv}
b_{\cV}: \cG/\cH\to (\cV^{\cH})^{\vee}\ot_{\cO_{X}} \cV.
\end{equation}
Here the right side is identified with the total space of the vector bundle with the same name.

\subsection{The case $\cH$ is reductive}
From now until \S\ref{ss:Hpar}, we assume $\cH$ is a reductive group scheme over $X$. In this case, the fppf sheaf $\cG/\cH$ is representable by a scheme which is affine over $X$. This is a special case of a general theorem of Alper \cite[Theorem 9.4.1]{Alper} (which only requires $\cG$ to be affine over $X$). This immediately implies that the natural map
\begin{equation*}
\cG/\cH\to \cG\sslash \cH=\un{\textup{Spec}}_{X}(\cO_{\cG}^{\cH})
\end{equation*}
is an isomorphism.


\begin{lemma}\label{l:GH gen sch} Assume $\cH$ is reductive. There exists   $\cV\in\Rep(\cG)$ such that $b_{\cV}$ is a closed embedding.
\end{lemma}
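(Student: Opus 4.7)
The plan is to take $\cV$ to be a sufficiently large $\cG$-stable subbundle of $\cO_\cG$, where $\cG$ acts by right translation. Since both $\cG/\cH=\un{\textup{Spec}}_X(\cO_\cG^\cH)$ and the total space $(\cV^\cH)^\vee\otimes_{\cO_X}\cV$ are affine over $X$, the map $b_\cV$ is a closed embedding if and only if the dual homomorphism of $\cO_X$-algebras
\begin{equation*}
b_\cV^*\colon \Sym_{\cO_X}(\cV^\cH\otimes\cV^\vee)\longrightarrow \cO_\cG^\cH
\end{equation*}
is surjective. Its linear piece sends $v\otimes\ell$ to the matrix coefficient $g\mapsto \ell(g\cdot v)$.

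By Remark 3.1 applied to $\cR=\cO_\cG$, I would first write $\cO_\cG=\bigcup_i\cV_i$ as a filtered union of finite-rank $\cG$-stable subbundles; this is available since $\cG\to X$ is flat, so $\cO_\cG$ is a quasi-coherent $\cO_X$-module which is a filtered union of coherent torsion-free (hence locally free) subsheaves. For $\cV=\cV_i\subset\cO_\cG$, let $\ev_e\in\cV^\vee$ denote the functional of evaluation at the identity section $e\colon X\to\cG$. The computation
\begin{equation*}
m_{v,\ev_e}(g)=\ev_e(g\cdot v)=(g\cdot v)(e)=v(eg)=v(g)
\end{equation*}
shows that $m_{v,\ev_e}=v$ as elements of $\cO_\cG$. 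Consequently, the image of $b_{\cV_i}^*$ contains $\cV_i^\cH$, and therefore contains the $\cO_X$-subalgebra $A_i\subset\cO_\cG^\cH$ generated by $\cV_i^\cH$.

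The task then reduces to showing that $A_i=\cO_\cG^\cH$ for some $i$. Since $\cH$-invariants commute with filtered colimits, $\bigcup_i\cV_i^\cH=\cO_\cG^\cH$, and hence $\bigcup_i A_i=\cO_\cG^\cH$ as well. The key input is that $\cG/\cH\to X$ is affine of finite type, so $\cO_\cG^\cH$ is a quasi-coherent sheaf of $\cO_X$-algebras of finite type. On any affine open $U\subset X$, $\cO_\cG^\cH(U)$ is therefore a finitely generated $\cO_X(U)$-algebra, and any such algebra expressed as a filtered union of subalgebras must coincide with a member of the filtration. Covering the Noetherian curve $X$ by finitely many affine opens and taking the supremum in the directed system of the indices that work on each piece produces a single $i$ with $A_i=\cO_\cG^\cH$ on every open in the cover, hence globally.

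The main obstacle is this last compactness step: passing from local finite presentations of $\cO_\cG^\cH$ as an algebra to a single global $\cV=\cV_i$ whose $\cH$-invariants algebraically generate the entire sheaf of algebras on $X$. This is handled by the Noetherianity and quasi-compactness of $X$, together with the commutation of filtered colimits of quasi-coherent sheaves with sections over affine opens.
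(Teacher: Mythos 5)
Your architecture is the same as the paper's: reduce the closedness of $b_{\cV}$ to surjectivity of $\Sym_{\cO_X}(\cV^{\cH}\ot\cV^{\vee})\to\cO_{\cG}^{\cH}$, use the evaluation-at-the-unit functional to see that the linear part of this map already contains $\cV^{\cH}$ (this is exactly the paper's argument with $e:\cO_{X}\to\cV^{\vee}$), and then conclude provided one can choose a $\cG$-stable subbundle $\cV\subset\cO_{\cG}$ whose $\cH$-invariants generate $\cO_{\cG}^{\cH}$ as an $\cO_{X}$-algebra; your filtered-exhaustion/compactness step for producing such a $\cV$ is fine. The genuine problem is the sentence you yourself flag as ``the key input'': you assert that $\cG/\cH\to X$ is affine \emph{of finite type}, hence that $\cO_{\cG}^{\cH}$ is a finitely generated sheaf of $\cO_{X}$-algebras. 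What is established before Lemma \ref{l:GH gen sch} (via Alper) is only that the fppf quotient $\cG/\cH$ is representable by a scheme affine over $X$ and equals $\un{\textup{Spec}}_{X}(\cO_{\cG}^{\cH})$. Finite-typeness of this scheme over $X$ is \emph{literally} the finite generation of $\cO_{\cG}^{\cH}$, i.e.\ the invariant-theoretic heart of the lemma, and as written you assume it rather than prove it. This is exactly where the paper invokes its one nontrivial input: it uses Remark \ref{r:enough V} to embed $\cG$ equivariantly into a vector bundle and then cites Seshadri's geometric reductivity theorem over a base to obtain finite generation of $\cO_{\cG}^{\cH}$.

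The gap is repairable without changing your plan, but something must be said. Either cite geometric reductivity (Seshadri, as the paper does, or the finite-type assertions in Alper's theory of adequate moduli spaces), or deduce finite-typeness by descent from the representability statement already in hand: $\cG\to\cG/\cH$ is an fppf $\cH$-torsor, hence surjective, flat and of finite presentation (since $\cH$ is smooth affine), the composite $\cG\to X$ is of finite type, and ``locally of finite type'' descends along such a surjection on the source (e.g.\ by the standard limit argument: writing $\cO_{\cG/\cH}$ as a filtered union of finite-type subalgebras $B_{i}$, spreading out the faithfully flat finitely presented cover to some $B_{i}$, and using faithful flatness to see $B_{i}=\cO_{\cG/\cH}$); affineness over the Noetherian curve $X$ then upgrades this to finite type. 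A minor further remark: once finite generation is known, the paper obtains a suitable $\cV$ in one step by placing finitely many algebra generators of $\cO_{\cG}^{\cH}$ inside a single $\cG$-stable subbundle via Remark \ref{r:enough V}, which is a little shorter than your exhaustion-by-$\cV_{i}$ argument, though both are correct.
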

\begin{proof} 

Since $\cO_{\cG}$ is a finitely generated sheaf of algebras over $\cO_{X}$, by Remark \ref{r:enough V}, there exists a $\cG$-stable subbundle $\cV_{0}\subset \cO_{\cG}$ such that $\Sym(\cV_{0})\to \cO_{\cG}$ is surjective.  In other words, $\cG$ embeds into the total space of the vector bundle $\cV_{0}^{\vee}$. This allows us to apply  Seshadri's theorem \cite[Theorem 3]{Ses} to conclude that $\cO_{\cG}^{\cH}$ is a finitely generated sheaf of algebras over $\cO_{X}$.  Choosing a $\cG$-stable $\cV\subset \cO_{\cG}$ such that $\cV^{\cH}$ generates $\cO_{\cG}^{\cH}$ as a quasi-coherent sheaf of algebras. We claim that $b_{\cV}$ is a closed embedding. To see this, it suffices to show that the image of $m_{\cV}: \cV^{\cH}\ot\cV^{\vee}\to \cO_{\cG}^{\cH}$ contains $\cV^{\cH}$. But let $e:\cO_{X}\to \cV^{\vee}$ correspond to the map $\cV\subset \cO_{\cG}\xr{\ev_{1}}\cO_{X}$ ($\ev_{1}$ is restriction to the unit section), then the composition
\begin{equation*}
\cV^{\cH}\xr{\id\ot e }\cV^{\cH}\ot\cV^{\vee}\xr{m_{\cV}} \cO_{\cG}^{\cH}
\end{equation*}
is the inclusion of $\cV^{\cH}$ in $\cO_{\cG}^{\cH}$. Therefore the image of $m_{\cV}$ contains algebra generators of $\cO_{\cG}^{\cH}$, and $b_{\cV}$ is a closed embedding.
\end{proof}

\subsection{Bundles with sections}\label{ss:bV Bun} 
Let $Z\to X$ be a stack with a left $\cG$-action. For any right $\cG$-torsor $\cE$ over $X\times S$, we can form the twisted product $\pi_{\cE,Z}: \cE\times^{\cG}_{X}Z\to X\times S$, which \'etale locally over $X\times S$ is isomorphic to $Z\times S$. Let $\Bun_{\cG}(Z)$ be the stack whose $S$-points consists of pairts $(\cE, t)$ where $\cE$ is a right $G$-torsor over $X\times S$ and $t: X\times S\to \cE\times^{\cG}_{X}Z
$ be a section of $\pi_{\cE,Z}$.

When $Z$ is the total space of  $\cW\in \Rep(\cG)$, then we write $\Bun_{\cG}(\cW)$ for $\Bun_{\cG}(Z)$. In this case, $\Bun_{\cG}(\cW)$ is the relative spectrum of the symmetric algebra of a coherent sheaf (this coherent sheaf is $\cB=R^{1}p_{\Bun*}(\cW^{\vee}_{\univ}\ot p^{*}_{X}\om_{X})$, where $p_{\Bun}: X\times \Bun_{\cG}\to \Bun_{\cG}$ and $p_{X}: X\times \Bun_{\cG}\to X$ are the projections, and $\cW_{\univ}$ is the bundle associated to $\cW$ and the universal $\cG$-torsor over $X\times \Bun_{\cG}$).


It is clear that $\Bun_{\cG}(\cG/\cH)\cong \Bun_{\cH}$.  Applying the map $b_{\cV}$ to the construction $\Bun_{\cG}(-)$ gives a map of stacks
\begin{equation*}
b^{\Bun}_{\cV}: \Bun_{\cH}\cong \Bun_{\cG}(\cG/\cH)\to \Bun_{\cG}((\cV^{\cH})^{\vee}\ot \cV).
\end{equation*}

\begin{lemma}\label{l:BunH emb sch} Assume $\cH$ is reductive, and let $\cV$ be as in Lemma \ref{l:GH gen sch}.
\begin{enumerate}
\item The map $b^{\Bun}_{\cV}$ is a closed embedding.
\item Let $\Bun_{\cG}((\cV^{\cH})^{\vee}\ot \cV)^{\c}\subset \Bun_{\cG}((\cV^{\cH})^{\vee}\ot \cV)$ be the open substack consisting of $(\cE,t)$ such that $t|_{X\times \{s\}}$ is nonzero for any geometric point $s\in S$. Then the image of $b^{\Bun}_{\cV}$ lies in $\Bun_{\cG}((\cV^{\cH})^{\vee}\ot \cV)^{\c}$. In particular,  $b^{\Bun}_{\cV}$ induces a closed embedding
\begin{equation*}
b^{\Bun,\c}_{\cV}: \Bun_{\cH}\incl \Bun_{\cG}((\cV^{\cH})^{\vee}\ot \cV)^{\c}.
\end{equation*}
\end{enumerate}
\end{lemma}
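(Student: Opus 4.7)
The plan is to derive both (1) and (2) from the closed embedding $b_\cV: \cG/\cH \hookrightarrow (\cV^\cH)^\vee\otimes\cV$ of Lemma \ref{l:GH gen sch} via the functoriality of the construction $Z\mapsto \Bun_\cG(Z)$ (described in \S\ref{ss:bV Bun}), combined with the standard identification $\Bun_\cG(\cG/\cH)\cong \Bun_\cH$ recalled there.

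For (1), the key sublemma to establish is the following: if $j: Z_1\hookrightarrow Z_2$ is a $\cG$-equivariant closed embedding of algebraic $X$-spaces (with left $\cG$-action), then the induced map $\Bun_\cG(Z_1)\to \Bun_\cG(Z_2)$ is a closed embedding. Given a $\cG$-torsor $\cE$ on $X\times S$, the twisted map $\cE\times^\cG_X Z_1 \to \cE\times^\cG_X Z_2$ is a closed embedding (closed embeddings being stable under fppf descent, and the formation of $\cE\times^\cG_X Z$ being fppf-local over $X\times S$). Then sections of $\pi_{\cE,Z_2}$ that factor through $\cE\times^\cG_X Z_1$ are cut out by a closed condition on $S$, namely the vanishing of the pullback of the ideal sheaf of $\cE\times^\cG_X Z_1$ along the section. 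Applying this sublemma to $j=b_\cV$ (and using $\Bun_\cG(\cG/\cH)\cong \Bun_\cH$) gives part (1) immediately.

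For (2), I will show that $t_\cF := b^\Bun_\cV(\cF)$, restricted to $X\times\{s\}$ for any geometric point $s\in S$, is a nonzero section. First, I observe that one may enlarge $\cV$ (without spoiling the conclusion of Lemma \ref{l:GH gen sch}, nor its proof) so that $\cV$ contains the constant subsheaf $\cO_X\cdot 1\subset \cO_\cG$. Then $\cV^\cH\supset \cO_X$ is nowhere vanishing, and the natural inclusion $\iota:\cV^\cH\hookrightarrow \cV$ is a nowhere-zero section of $(\cV^\cH)^\vee\otimes\cV$. By construction, $b_\cV$ sends the identity coset $e\cH$ in $(\cG/\cH)_x$ to $\iota|_x$. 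Now, any $\cH$-torsor $\cF_s$ on the smooth curve $X\times\{s\}$ is trivial at the generic point $\eta$ (since $\cH_\eta$ is connected reductive over the function field $F$, and one can apply the triviality of $\cH$-torsors over $\Spec F$ from the reductive structure, or simply the genericity argument); fix such a trivialization. In this trivialization, $t_\cF$ at $\eta$ is precisely $\iota|_\eta \neq 0$, so $t_\cF|_{X\times\{s\}}$ is a nonzero section. Hence $b^\Bun_\cV$ lands in $\Bun_\cG((\cV^\cH)^\vee\otimes\cV)^\circ$, and the closed embedding refinement follows from (1).

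The main point requiring care is the sublemma about $\Bun_\cG(-)$ preserving closed embeddings: it is routine, but depends on knowing that sections $X\times S\to W$ of a relative algebraic space factoring through a closed subspace form a closed substack. The other steps, including the WLOG enlargement of $\cV$ to contain $1$, should be straightforward.
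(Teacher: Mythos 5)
Your part (1) is exactly the paper's argument: the paper also reduces to the observation that a $\cG$-equivariant closed embedding $Z_{1}\incl Z_{2}$ induces a closed embedding $\Bun_{\cG}(Z_{1})\to\Bun_{\cG}(Z_{2})$, checked after base change to a test scheme (your remark that the factorization condition is the vanishing of the pulled-back ideal sheaf, a closed condition on $S$ via properness of $X$, is the same routine verification). For part (2) you take a genuinely different and heavier route. The paper simply identifies the section $t$ attached to $\cF\in\Bun_{\cH}(S)$ with the twisted inclusion $\cV^{\cH}\bt\cO_{S}=(\cV^{\cH})_{\cF}\incl \cV_{\cF}=\cV_{\cE}$, which is (\'etale-)locally on $X\times S$ isomorphic to the fixed subbundle inclusion $\cV^{\cH}\subset\cV$ and hence nonzero on every fiber $X\times\{s\}$; no trivialization of $\cF$ over a large open set is needed, since nonvanishing can be tested after an \'etale base change trivializing $\cF$ near a single point. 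Your argument instead trivializes $\cF_{s}$ at the generic point of $X_{k(s)}$ and evaluates $t$ there; this is correct, but it invokes the nontrivial theorem of Steinberg/Borel--Springer (plus Tsen) that torsors under a connected reductive group over the function field of a curve over an algebraically closed field are trivial, which is overkill here, and your parenthetical fallback ``simply the genericity argument'' is not a proof (generic triviality of torsors under a smooth affine group scheme is not automatic). Also note that your preliminary enlargement of $\cV$ to contain $\cO_{X}\cdot 1$ technically proves the lemma for a modified $\cV$ rather than the given one; it is harmless for the application to Theorem \ref{th:main}, but it is also unnecessary, since $\cV^{\cH}$ is a subbundle and is nonzero whenever $b_{\cV}$ is a closed embedding with $\cH\neq\cG$, which is all the nonvanishing argument requires.
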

\begin{proof}
(1) We first observe that if $Z_{1}\incl Z_{2}$ is a closed embedding of $\cG$-schemes over $X$, then the induced map $\Bun_{\cG}(Z_{1})\to \Bun_{\cG}(Z_{2})$ is also a closed embedding. This is easy to see by making a base change to any test scheme mapping to $\Bun_{\cG}(Z_{2})$. Applying this observation to the closed embedding $b_{\cV}$, we conclude that $\Bun_{\cH}\cong \Bun_{\cG}(\cG/\cH)\to \Bun_{\cG}((\cV^{\cH})^{\vee}\ot \cV)$ is  a closed embedding.

(2) If $\cF\in \Bun_{\cH}(S)$ is an $\cH$-torsor over $X\times S$, and $\cE=\cF\times^{\cH}\cG\in \Bun_{\cG}(S)$, the section $t$ of $(\cV^{\cH})^{\vee}\ot_{\cO_{X}}\cV_{\cE}$ is induced from the map
\begin{equation*}
\cV^{\cH}\bt \cO_{S}=(\cV^{\cH})_{\cF}\incl \cV_{\cF}=\cV_{\cE}
\end{equation*}
which is clearly nonzero along $X\times \{s\}$ for any geometric point $s\in S$. Hence the image of $b^{\Bun}_{\cV}$ lands in $\Bun_{\cG}((\cV^{\cH})^{\vee}\ot \cV)^{\c}$.
\end{proof}

\subsection{The case  $\cH$ is a Bruhat-Tits group scheme}\label{ss:Hpar}
For the rest of the section, we assume $\cH$ is a Bruhat-Tits group scheme over $X$, see \S\ref{ss:BT}.

For $\cV\in\Rep(\cG)$, consider the map $b_{\cV}$ defined in \eqref{bv}. We can choose $\cV\in\Rep(\cG)$ such that $b_{\cV}$ is a closed embedding when restricted to $U$. This is possible:  by applying Lemma \ref{l:GH gen sch} to the reductive groups $\cH_{U}\incl \cG_{U}$ over $U$ (the proof of Lemma \ref{l:GH gen sch} works for $U$ in place of $X$), we obtain $\cV'\in \Rep(\cG_{U})$ such that $b_{\cV'}: \cG_{U}/\cH_{U}\to (\cV'^{\cH_{U}})^{\vee}\ot_{\cO_{U}}\cV'$ is a closed embedding. By Remark \ref{r:enough V} we may assume $\cV'\subset \cO_{\cG_{U}}$. Then take $\cV\in \Rep(\cG)$ to be the saturation of $\cV'$ over $X$, so that $b_{\cV}|_{U}=b_{\cV'}$ is a closed embedding.

The following result generalizes Lemma \ref{l:BunH emb sch} to the case $\cH$ is a Bruhat-Tits group scheme. The essential part of the proof follows the same lines as Step 2 in Breutman's proof of \cite[Theorem 3.26]{Br}, which relies on a deep result of Ansch\"utz \cite{An}.

\begin{prop}\label{l:bV par} Assume $\cH$ is a Bruhat-Tits group scheme over $X$, and that $\cV\in \Rep(\cG)$ is such that $b_{\cV}$ is a closed embedding over $U$. Then the map $b_{\cV}^{\Bun}$ is a closed embedding.
\end{prop}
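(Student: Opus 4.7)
The plan is to verify that $b_\cV^{\Bun}$ is a closed embedding by fixing an affine test scheme $S$ equipped with a map $(\cE, t): S \to \Bun_\cG((\cV^\cH)^\vee \ot \cV)^\c$ and showing that the fiber
\[
Z_S := S \times_{\Bun_\cG((\cV^\cH)^\vee \ot \cV)^\c} \Bun_\cH
\]
is representable by a closed subscheme of $S$. An $S'$-point of $Z_S$ is the same as a section $\sigma : X \times S' \to \cE/\cH$ satisfying $b_{\cV, \cE} \c \sigma = t|_{X \times S'}$, where $b_{\cV, \cE} : \cE/\cH \to (\cV^\cH)^\vee \ot \cV_\cE$ is the map on associated twists induced by $b_\cV$. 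First I would use that $b_\cV|_U$ is a closed embedding to cut out a closed subscheme $S_1 \subset S$: it is the locus on which $t|_{U \times S}$ factors through the closed subscheme $(\cE/\cH)|_{U \times S} \incl ((\cV^\cH)^\vee \ot \cV_\cE)|_{U \times S}$. Over $S_1$ one obtains a canonical section $\sigma_U$ of $(\cE/\cH)|_{U \times S_1}$, equivalently a canonical $\cH$-reduction $\cF_U$ of $\cE|_{U \times S_1}$. Separation of $\cG/\cH \to X$ (it is quasi-affine, as an fppf quotient of the affine scheme $\cG$ by the free $\cH$-action) then yields uniqueness of any extension of $\sigma_U$ to a section on $X \times S_1$: two sections of $\cE/\cH$ agreeing on the schematically dense open $U \times S_1$ must coincide. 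This handles the monomorphism part of being a closed embedding.

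The remaining and main step is to show that the locus $S_2 \subset S_1$ on which $\sigma_U$ extends to a section of $\cE/\cH$ over all of $X \times S_2$ is closed in $S_1$. By Beauville--Laszlo formal patching, extending across a point $x \in X \bs U$ is a problem local to the formal disk $\hat D_x := \Spec \hat\cO_{X, x}$: one must produce an $\cH$-reduction of $\cE|_{\hat D_x \times S_1}$ whose restriction to the punctured disk agrees with $\sigma_U$. Here the deep input is Ansch\"utz's theorem \cite{An}, which states that every $\cH$-torsor on a formal disk with strictly Henselian base is trivial. It trivializes the disk side of the patching data and converts the extension question into a matching condition in the Beilinson--Drinfeld affine Grassmannian $\Gr_{\cH, x}$; since only bounded modifications arise from a fixed $(\cE, t)$, this matching condition is closed. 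Combined with uniqueness, this identifies $Z_S$ with $S_2$, a closed subscheme of $S$.

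The hard part, as in Breutmann's argument, is the extension step across each parahoric point: over the reductive open $U$ the proof essentially reduces to Lemma \ref{l:BunH emb sch}, but at each $x \in X \bs U$ one has no a priori structural control over $\cH$-torsors on $\hat D_x$, and it is Ansch\"utz's theorem that supplies it.
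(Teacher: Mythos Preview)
Your outline has the right architecture—obtain a canonical $\cH$-reduction over $U$ from $t$, then extend across the parahoric points—but the extension step contains a genuine gap. Ansch\"utz's theorem does \emph{not} assert triviality of $\cH$-torsors on a punctured formal disk over an arbitrary (or strictly Henselian) base; the statement actually invoked is that for $C$ the completed algebraic closure of the fraction field of a DVR, every $\cH$-torsor on $D_{x,\cO_C}\setminus\{x\}$ is trivial. Over a general test scheme $S$ there is no such trivialization available, so you cannot translate the extension problem into the affine Grassmannian, and hence cannot carve out your $S_2\subset S_1$ directly. This is precisely why the paper does not attempt your direct construction: it first checks, using \cite{AH}, that the map is schematic and of finite type, and then verifies properness via the valuative criterion, which reduces the question to $S=\Spec\cO_C$ where Ansch\"utz does apply.

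Even after reducing to $\cO_C$, your sketch is missing a second input. Once $\cF_U|_{D^{\times}_{x,\cO_C}}$ is trivialized, extensions to $D_{x,\cO_C}$ are $\cO_C$-points of $\Fl_{\cH,x}$, and the constraint is that the image in $\Fl_{\cG,x}$ equal the prescribed $\cO_C$-point coming from $\cE$. Producing this lift is not a ``closed matching condition'' on $S$; it is the existence half of the valuative criterion, and it is supplied by the ind-properness of $\Fl_{\cH,x}$ \cite{Rich} applied to the $C$-point furnished by the given $\cF_C$. Without the valuative-criterion framework you have no such $\cF_C$ to lift. (A minor aside: your claim that $\cG/\cH$ is quasi-affine is not justified for Bruhat--Tits $\cH$; what you actually need, and do have, is separatedness over $X$, which follows from $\cH\incl\cG$ being a closed embedding.)
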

\begin{proof}
For the proof we can base change the situation to $\ov k$. Therefore we will assume $k$ is algebraically closed.

Let $\cZ\subset (\cV^{\cH})^{\vee}\ot\cV$ be the closure of the image of $\cG_{U}/\cH_{U}$. Since $\cG/\cH$ is smooth over $X$, $\cG_{U}/\cH_{U}$ is dense in $\cG/\cH$ hence $b_{\cV}$ lands in $\cZ$. Therefore we have a $\cG$-equivariant map
\begin{equation}\label{cz}
c_{\cV}: \cG/\cH\to \cZ
\end{equation}
which is an isomorphism over $U$.  It then induces a map 
\begin{equation*}
c^{\Bun}_{\cV}: \Bun_{\cH}\cong \Bun_{\cG}(\cG/\cH)\to \Bun_{\cG}(\cZ)
\end{equation*}
such that $b^{\Bun}_{\cV}$ is the composition of $c^{\Bun}_{\cV}$ followed by the closed embedding $\Bun_{\cG}(\cZ)\incl \Bun_{\cG}((\cV^{\cH})^{\vee}\ot\cV)$ (see the proof of Lemma \ref{l:BunH emb sch}(1)). Therefore it suffices to prove that $c^{\Bun}_{\cV}$ is a closed embedding.

%

First we check that $c^{\Bun}_{\cV}$ is schematic and of finite type. By \cite[Proposition 2.2(a)]{AH}, there is a faithful representation $\cV'$ of $\cH$ such that $\cH\incl \GL(\cV')$ is a closed embedding, $\GL(\cV')/\cH$ is quasi-affine over $X$ and it admits a $\GL(\cV')$-equivariant open embedding into a $\GL(\cV')$-scheme $Y$ affine over $X$. Moreover, the construction of $\cV'$ in {\em loc.cit.} allows us to arrange that  $\cV'$ extends to a representation of $\cG$. Now we have maps 
$$\rho: \Bun_{\cH}\xr{c^{\Bun}_{\cV}} \Bun_{\cG}(\cZ)\to\Bun_{\cG} \to \Bun_{\GL(\cV')}\cong\Bun_{N}$$ 
where $N$ is the rank of $\cV'$. By \cite[Theorem 2.6]{AH} applied to the embedding $\cH\incl \GL(\cV')$, we conclude that $\rho$ is schematic and of finite type.  So {\em a fortiori} $c^{\Bun}_{\cV}$ is schematic and of finite type.

Next we check that $c^{\Bun}_{\cV}$ is proper. For this it suffices to check that $c^{\Bun}_{\cV}$ satisfies the existence and uniqueness of the valuative criterion for DVRs \cite[Lemma 104.11.2, 104.11.3]{Stack}. Let $R$  be a DVR with $K=\Frac(R)$. Let $(\cE,t): \Spec R\to \Bun_{\cG}(\cZ)$,  and its restriction $(\cE,t)_{K}$ to $X_{K}$ lifts to $\cF_{K}: \Spec K\to \Bun_{\cH}$ (an $\cH$-torsor over $X_{K}$). We would like to check that there exists a finite extension $K'/K$, with $R'=\cO_{K'}$, and an $\cH$-torsor $\cF_{R'}$ over $X_{R'}$, viewed as a map $\Spec R'\to \Bun_{\cH}$  such that the following diagram is commutative
\begin{equation*}
\xymatrix{ \Spec K' \ar[r]\ar[d] & \Spec K \ar[r]^{\cF_{K}}\ar[d] & \Bun_{\cH}\ar[d]^{c^{\Bun}_{\cV}}\\
\Spec R' \ar@{-->}[urr]^{\cF_{R'}}\ar[r] &\Spec R \ar[r]^{(\cE,t)} & \Bun_{\cG}(\cZ)}
\end{equation*}
Moreover, any finite extension $K'/K$, the dotted arrow (together with $2$-isomorphisms making the diagram commutative) should be unique up to unique isomorphism.

For this we may replace $K$ by $C$, the completion of an algebraic closure of $K$, and replace $R$ by $\cO_{C}$, and show the existence and uniqueness of the dotted arrow in the following diagram 
\begin{equation*}
\xymatrix{ \Spec C\ar[rr]^{\cF_{C}}\ar[d] & & \Bun_{\cH}\ar[d]^{c^{\Bun}_{\cV}}\\
\Spec \cO_{C}\ar@{-->}[urr]^{\cF}\ar[rr]^{(\cE,t)} & &\Bun_{\cG}(\cZ)
}
\end{equation*}

Since \eqref{cz} is an isomorphism over $U$, $(\cE,t)$ gives an $\cH$-reduction $\cF_{U}$ of $\cE|_{U_{\cO_{C}}}$. We already have an $\cH$-torsor $\cF_{C}$ over $X_{C}$ and by the commutation of the square above, it coincides with $\cF_{U}$ over $U_{C}$. Therefore we have an $\cH$-torsor $\cF^{\c}$ over $U_{\cO_{C}}\cup X_{C}=X_{\cO_{C}}\bs (X\bs U)$ (where $X\bs U$ is identified with a subset of the special fiber of $X_{\cO_{C}}$). We only need to extend  $\cF^{\c}$  to an $\cH$-torsor $\cF$ over $X_{\cO_{C}}$ (the diagram above will then be commutative, for the datum of $t$ is determined by its restriction to $U_{\cO_{C}}$).   

For each $x\in X\bs U$, recall the completed local ring $\cO_{x}$ and its fraction field $F_{x}$. Let $D_{x}=\Spec \cO_{x}$ and $D^{\times}_{x}=\Spec F_{x}$. For any  $\ov\FF_{q}$-algebra $A$, we denote $D_{x,A}=\Spec (\cO_{x}\wh\ot_{\ov\FF_{q}} A)$ and $D^{\times}_{x,A}=\Spec (F_{x}\wh\ot_{\ov\FF_{q}}A)$.  It suffices to show that the $\cH$-torsor  $\cF^{\c}|_{D_{x,\cO_{C}}\bs \{x\}}$ extends to $D_{x,\cO_{C}}$, and the extension is unique up to a unique isomorphism. Then use Beauville-Laszlo gluing  to get the existence and uniqueness of the global extension to $X_{\cO_{C}}$.

By a deep result of Ansch\"utz \cite[paragraph after Theorem 1.1]{An}, for $\cH$ a Bruhat-Tits group scheme, any $\cH$-torsor on $D_{x,\cO_{C}}\bs \{x\}$ is trivial. We fix a trivialization of $\cF_{U}|_{D^{\times}_{x,\cO_{C}}}$. Then extensions of $\cF_{U}|_{D^{\times}_{x,\cO_{C}}}$ to $D_{x,\cO_{C}}$ are parametrized by $\cO_{C}$-points of the affine flag variety $\Fl_{\cH,x}:=L_{x}\cH/L_{x}^{+}\cH$. The trivialization of $\cF_{U}|_{D^{\times}_{x,\cO_{C}}}$ induces a trivialization of $\cE|_{D^{\times}_{x,\cO_{C}}}$, so that extensions of $\cE|_{D^{\times}_{x,\cO_{C}}}$ to $D_{x,\cO_{C}}$ are parametrized by $\cO_{C}$-points of $\Fl_{\cG,x}=L_{x}\cG/L_{x}^{+}\cG$, and $\cE|_{D_{x,\cO_{C}}}$ gives a particular $\cO_{C}$-point of $\Fl_{\cG,x}$. On the other hand, $\cF_{C}|_{D_{x,C}}$ gives a $C$-point of $\Fl_{\cH,x}$ so we have a commutative diagram
\begin{equation*}
\xymatrix{\Spec C \ar[rr]^{\cF_{C}|_{D_{x,C}}}\ar[d] && \Fl_{\cH,x}\ar[d]\\
\Spec \cO_{C}\ar[rr]^{\cE|_{D_{x,\cO_{C}}}}\ar@{-->}[urr] && \Fl_{\cG,x}}
\end{equation*}
By \cite{Rich}, $\Fl_{\cH,x}$ is ind-proper, therefore the dotted arrow above exists and is unique. This shows the existence and uniqueness of $\cF|_{D_{x,\cO_{C}}}$ for each $x\in X\bs U$, hence the existence and uniqueness of $\cF$ itself.

  
Knowing that $c^{\Bun}_{\cV}$ is proper, it remains to show that geometric fibers of $c^{\Bun}_{\cV}$ are either empty or a (reduced) geometric point. Let $K\supset \ov\FF_{q}$ be an algebraically closed field, and let $(\cE,t)$ be a $K$-point of $\Bun_{\cG}(\cZ)$. As above, $(\cE,t)_{U_{K}}$ determines an $\cH$-reduction $\cF_{U}$ over $U_{K}$. Upon choosing a trivialization of $\cF_{U}|_{D^{\times}_{x,K}}$ for each $x\in X\bs U$, extensions of $\cF_{U}$ to $X_{K}$ are parametrized by $\prod_{x\in X\bs U}\Fl_{\cH,x}(K)$. On the other hand, extensions of $\cE|_{U_{K}}$ to $X_{K}$  are parametrized by $\prod_{x\in X\bs U}\Fl_{\cG,x}(K)$. Therefore, the fiber $c^{\Bun,-1}_{\cV}(\cE,t)$, as a $K$-scheme, is the fiber of 
\begin{equation}\label{phx}
\prod_{x\in X\bs U}\Fl_{\cH,x}\to \prod_{x\in X\bs U} \Fl_{\cG,x}
\end{equation}
over the given $K$-point of $\prod_{x\in X\bs U} \Fl_{\cG,x}$ given by $\cE|_{D_{x,K}}$. Since $\cH\incl \cG$ is a closed embedding, so is \eqref{phx}, therefore the fiber of \eqref{phx} over a $K$-point is either $\Spec K$ or empty. This finishes the proof. 

\end{proof}

\section{Closedness of special cycles}
In this section, the base field $k=\FF_{q}$.  We will prove Theorem \ref{th:main} in this section. 

\subsection{Shtukas with sections} Let $\cW\in \Rep(\cG)$. Consider the  ind-stack $\Sht^{r}_{\cG}(\cW)$ whose $S$-points are tuples $(x_{\bu}, \cE_{\bu}, f_{\bu}, \io, t_{\bu})$ where
\begin{itemize}
\item $(x_{\bu},\cE_{\bu},f_{\bu}, \io)\in \Sht^{r}_{\cG}(S)$;
\item For $0\le i\le r$, $t_{i}$ is a section of $\cW_{\cE_{i}}$
\end{itemize}
such that the following diagram is commutative 
\begin{equation}\label{map from F}
\xymatrix{\cO_{X\times S}\ar[d]^{t_{0}}\ar@{=}[r] & \cdots \ar@{=}[r] & \cO_{X\times S} \ar@{=}[r]\ar[d]^{t_{r}} & \cO_{X\times S}\ar[d]^{{}^{\t}t_{0}}\\
\cW_{\cE_{0}}\ar@{-->}[r]^-{f_{1}} & \cdots\ar@{-->}[r]^-{f_{r}} &  \cW_{\cE_{r}}\ar[r]^-{\io}_-{\sim} & \cW_{{}^{\t}\cE_{0}}={}^{\t}(\cW_{\cE_{0}})}
\end{equation}
Define an open substack $\Sht^{r}_{\cG}(\cW)^{\c}\subset \Sht^{r}_{\cG}(\cW)$ whose $S$-points are those $(x_{\bu}, \cE_{\bu}, f_{\bu}, \io, t_{\bu})$ such that for any geometric point $s\in S$, the restriction of $t_{i}$ to $X\times \{s\}$ is nonzero for any $0\le i\le r$.

\begin{prop}\label{p:sect fin} Let $\cG$ be a smooth affine group scheme over $X$. For $\cW\in\Rep(\cG)$, the forgetful map
\begin{equation*}
\Forg_{\cW} :\Sht^{r}_{\cG}(\cW)^{\c}\to \Sht^{r}_{\cG}
\end{equation*}
is schematic, finite and unramified.
\end{prop}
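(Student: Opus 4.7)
The plan is to verify schematic, quasi-finite, proper, and unramified separately (noting that finite $=$ quasi-finite $+$ proper) by analyzing the fibers of $\Forg_{\cW}$ over $\Sht^{r}_{\cG}$. The key observation is that a compatible tuple $(t_{0},\ldots,t_{r})$ is determined by $t_{0}$ alone (the $f_{i}$ propagate it), and the $\io$-condition becomes a single Frobenius-semilinear equation on the space of sections of $\cW_{\cE_{0}}$.

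\textbf{Schematicness.} Over any test $T\to \Sht^{r}_{\cG}$, the pulled-back universal Shtuka provides vector bundles $\cW_{\cE_{i}}$ on $X\times T$ together with the isomorphisms $f_{i}$ and $\io$. Let $\cW_{\bu}\subseteq \cW_{\cE_{0}}$ be the coherent subsheaf consisting of local sections $t_{0}$ such that every $f_{i}\circ\cdots\circ f_{1}(t_{0})$ extends to a regular section of $\cW_{\cE_{i}}$; it equals $\cW_{\cE_{0}}$ away from the legs and is a lattice modification along them. A compatible tuple over $T$ is the same as a section $t_{0}\in H^{0}(X\times T,\cW_{\bu})$ satisfying the linear-plus-$q$-semilinear equation $\io\circ f_{r}\circ\cdots\circ f_{1}(t_{0})={}^{\t}t_{0}$. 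The functor of sections of $\cW_{\cE_{0}}$ is representable by an affine $T$-scheme, and both the condition of lying in $\cW_{\bu}$ and the equation above are closed, so $\Forg_{\cW}$ is schematic.

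\textbf{Unramifiedness and quasi-finiteness.} At a geometric point $(y,t_{\bu})$, a first-order deformation is $\dot t_{0}\in H^{0}(X,\cW_{\bu,y})\otimes(\ep)$ with ${}^{\t}\dot t_{0}=0$ (Frobenius annihilates $\ep$), hence $\io f_{r}\cdots f_{1}(\dot t_{0})=0$ and so $\dot t_{0}=0$ by generic injectivity on the torsion-free $\cW_{\cE_{0}}$; this yields $\Omega^{1}_{\Forg_{\cW}}=0$. For quasi-finiteness over a geometric fiber, the equation is the fixed-point condition for a $q$-semilinear endomorphism of the finite-dimensional $k$-vector space $H^{0}(X,\cW_{\bu,y})$, whose solutions form an $\FF_{q}$-subspace of dimension at most $\dim_{k}H^{0}(X,\cW_{\bu,y})$, hence a finite set.

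\textbf{Properness, the main obstacle.} Apply the valuative criterion. Let $R$ be a DVR with uniformizer $\pi$ and fraction field $K$, let $y\colon \Spec R\to \Sht^{r}_{\cG}$, and let $t^{K}_{\bu}$ be a nonzero compatible tuple over $K$. At a point $p$ on the special fiber off all legs, $F:=\io f_{r}\cdots f_{1}$ is a local isomorphism; writing the germ of $t^{K}_{0}$ as $\pi^{-m}\tilde t_{0}$ with $\tilde t_{0}$ primitive, the equation $F(t^{K}_{0})={}^{\t}t^{K}_{0}$ matches $\pi$-orders $m=mq$, forcing $m=0$. At leg-points, integrality in $\cW_{\bu}$ extends from the generic fiber by $R$-flatness of the quotient $\cW_{\cE_{0}}/\cW_{\bu}$, which is supported on $R$-flat leg-sections. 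For the open condition: if $t^{R}_{0}=\pi^{n}s_{0}$ with $n\ge 1$ and $s_{0}|_{\textup{sp}}\ne 0$ somewhere, then by $R$-linearity of $F$ and $q$-semilinearity of ${}^{\t}$ one gets $F(s_{0})=\pi^{n(q-1)}\,{}^{\t}s_{0}$, so $F(s_{0})|_{\textup{sp}}=0$; but $F|_{\textup{sp}}$ is an isomorphism on the dense open complement of legs, forcing $s_{0}|_{\textup{sp}}=0$ everywhere, a contradiction. Hence $n=0$ and the extension remains fiberwise nonzero, producing a point of $\Sht^{r}_{\cG}(\cW)^{\c}$. The main technical hurdle is orchestrating this combined extension/non-vanishing pole-order analysis, and in particular the uniform treatment at leg-points via the structure of $\cW_{\bu}$.
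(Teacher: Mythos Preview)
Your argument is correct and, for properness, follows a genuinely different route from the paper. The paper factors $\Forg_{\cW}$ as $\Sht^{r}_{\cG}(\cW)^{\c}\xrightarrow{\pi_{\cW}} \PP\Sht^{r}_{\cG}(\cW)\to \Sht^{r}_{\cG}$: it builds a projectivized moduli $\PP\Bun_{\cG}(\cW)\cong \Proj(\Sym^{\bullet}\cA)$ over $\Bun_{\cG}$ (hence proper), deduces that $\PP\Sht^{r}_{\cG}(\cW)\to \Sht^{r}_{\cG}$ is proper, and then shows $\pi_{\cW}$ is an $\FF_{q}^{\times}$-torsor via the scalar equation $b^{q}=cb$. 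You instead run the valuative criterion directly, using that the Shtuka equation $F(t_{0})={}^{\tau}t_{0}$ forces the $\pi$-valuation $m$ of $t_{0}$ along the special fiber to satisfy $m=qm$, hence $m=0$. Both proofs exploit the same phenomenon---Frobenius is incompatible with nontrivial $\pi$-scaling---but the paper packages it structurally (and obtains the $\FF_{q}^{\times}$-torsor as a byproduct), while your approach is more elementary and avoids introducing the auxiliary projectivized stacks. For unramifiedness the paper instead identifies the geometric fiber explicitly as the $\FF_{q}$-space $\Omega_{0}=\Omega^{\phi}$; your $\Omega^{1}=0$ computation via Frobenius killing $\epsilon$ is an equally valid alternative.

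Two simplifications you can make. First, the valuation identity $m=qm$ already lives at the \emph{generic} point of the special fiber, where $F$ is automatically an isomorphism; once $m=0$ there, each $t_{i}$ extends over the remaining finitely many closed points (legs meeting the special fiber) by Hartogs on the regular $2$-dimensional scheme $X_{R}$, so the separate leg-point discussion via $\cW_{\bullet}$ and $R$-flatness is unnecessary. Second, $m=0$ already says $t_{0}$ is primitive along the special fiber, so your ``open condition'' paragraph duplicates the first and can be dropped.
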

\begin{proof} From the definition it is clear that $\Forg_{\cW}$ is schematic. We first show that $\Forg_{\cW}$ is proper.   For this we introduce a projectivized version of $\Sht^{r}_{\cG}(\cW)^{\c}$. 

Let $\PP\Bun_{\cG}(\cW)$ be the moduli stack whose $S$-points are triples $(\cE, \cM,t)$ where $\cE$ is a $\cG$-torsor over $X\times S$, $\cM$ is a line bundle over $S$, and $t: \cO_{X}\boxtimes\cM\to \cW_{\cE}$ is a map of coherent sheaves on $X\times S$ that is nonzero on $X\times \{s\}$ for all geometric points $s\in S$.

We claim that $\PP\Bun_{\cG}(\cW)$ is the projectivization of a sheaf of graded algebra over $\Bun_{\cG}$. Indeed,  let $\cE_{\univ}$ be the universal $\cG$-torsor over $\Bun_{\cG}\times X$, and let $\cW_{\univ}=\cW_{\cE_{\univ}}$ (a vector bundle over $\Bun_{\cG}\times X$). Let $p_{\Bun}: \Bun_{\cG}\times X\to \Bun_{\cG}$ and $p_{X}: \Bun_{\cG}\times X\to X$ be the projections. Consider the coherent sheaf $\cA=\bR^{1}p_{\Bun*}(\cW^{\vee}_{\univ}\ot p_{X}^{*}\om_{X})$ on $\Bun_{\cG}$.  Let $\Proj(\Sym^{\bu}(\cA))$ (relative to $\Bun_{\cG}$) be the projective bundle of hyperplanes in fibers of $\cA$.  By definition, $\Proj(\Sym^{\bu}(\cA))(S)$ classifies triples $(\cE, \cM, \s)$ where $\cE$ is a $\cG$-torsor over $X\times S$, $\cM$ is a line bundle over $S$, and $\s$ is a surjective map of coherent sheaves on $S$
\begin{equation*}
\s: h_{\cE}^{*}\cA\surj \cM^{\vee}.
\end{equation*}
Here $h_{\cE}: S\to \Bun_{G}$ is given by $\cE$. Equivalently, $\s$ is the same datum as a map of complexes $\bL h^{*}_{\cE}\bR p_{\Bun*}(\cW^{\vee}_{\univ}\ot p_{X}^{*}\om_{X})[1]\to \cM^{\vee}$, hence by base change is the same as $\bR \pr_{S*}(\cW^{\vee}_{\cE}\ot \pr_{X}^{*}\om_{X})[1]\to \cM^{\vee}$ (we use $\pr_{S}$ and $\pr_{X}$ to denote the projections $X\times S\to S$ and $X\times S\to X$).  
Taking relative Serre dual,  $\s^{\vee}$ is a map $\cM\to \bR \pr_{S*}(\pr_{X}^{*}\cW_{\cE})$ that is nonzero on $X\times \{s\}$ for all geometric points $s\in S$. Equivalently this is a map $t: \cO_{X}\boxtimes \cM\to \cW_{\cE}$ as in the definition of $\PP\Bun_{\cG}(\cW)$. This proves $\PP\Bun_{\cG}(\cW)\cong \Proj(\Sym^{\bu}(\cA))$. In particular, $\PP\Bun_{\cG}(\cW)\to \Bun_{\cG}(\cW)$ is proper.


Similarly define $\PP\Hk^{r}_{\cG}(\cW)$ to classify $(x_{\bu}, \cE_{\bu}, f_{\bu}, \cM, t_{\bu})$ where $(x_{\bu}, \cE_{\bu}, f_{\bu})\in \Hk^{r}_{\cG}(\cW)(S)$, $\cM$ is a line bundle on $S$ and $t_{i}: \cO_{X}\bt\cM\to \cW_{\cE_{i}}$ ($0\le i\le r$), compatible with the modifications $f_{\bu}$ and nonzero when restricted to $X\times \{s\}$ for all geometric points $s$.

We claim that the forgetful map $\PP\Hk^{r}_{\cG}(\cW)\to\Hk^{r}_{\cG}$ is proper. Indeed, $\PP\Hk^{r}_{\cG}(\cW)$ is closed in $\Hk^{r}_{\cG}\times_{p_{0}, \Bun_{\cG}}\PP\Bun_{\cG}(\cW)$ because $t_{0}$ determines all $t_{i}$ for $i\ge1$, and the existence of $t_{i}$ means the map $t_{0}: \cO_{X}\bt\cM|_{X\times S\bs \cup_{j\le i}\G(x_{j})}\to \cW_{\cE_{i}}|_{X\times S\bs \cup_{j\le i}\G(x_{j})}$  extends to the whole $X\times S$, which imposes a closed condition on $t_{0}$.

Finally we define $\PP\Sht^{r}_{\cG}(\cW)$ by the Cartesian square
\begin{equation}\label{def PSht}
\xymatrix{\PP\Sht^{r}_{\cG}(\cW)\ar[rr]\ar[d] && \PP\Hk^{r}_{\cG}(\cW)\ar[d]^{(p_{0},p_{r})}\\
\PP\Bun_{\cG}(\cW)\ar[rr]^-{(\id, \Fr)} && \PP\Bun_{\cG}(\cW)\times\PP\Bun_{\cG}(\cW)}
\end{equation}

The Cartesian square \eqref{def PSht} maps to the Cartesian square \eqref{def Sht} defining $\Sht^{r}_{\cG}$, and the maps $\PP\Hk^{r}_{\cG}(\cW)\to \Hk_{\cG}$ and $\PP\Bun_{\cG}(\cW)\to \Bun_{\cG}$ are both proper. Therefore the map $\PP\Sht^{r}_{\cG}(\cW)\to \Sht^{r}_{\cG}$ is proper. We have a factorization
\begin{equation*}
\Forg_{\cW}: \Sht^{r}_{\cG}(\cW)^{\c}\xr{\pi_{\cW}} \PP\Sht^{r}_{\cG}(\cW)\to \Sht^{r}_{\cG}
\end{equation*}
where the first map sends $(x_{\bu}, \cE_{\bu},f_{\bu}, \io, t_{\bu})$ to $(x_{\bu}, \cE_{\bu},f_{\bu}, \io, \cM=\cO_{S}, t_{\bu})$. To show $\Forg_{\cW}$ is proper, it remains to show that $\pi_{\cW}$ is proper. We claim that $\pi_{\cW}$ is a $\FF_{q}^{\times}$-torsor. Clearly $\FF_{q}^{\times}$ acts on $\Sht^{r}_{\cG}(\cW)^{\c}$ by scaling $t_{i}$, and $\pi_{\cW}$ is invariant under this action. Take an $S$-point $\xi=(x_{\bu}, \cE_{\bu}, f_{\bu}, \io,\cM, t_{\bu})$ of $\PP\Sht^{r}_{\cG}(\cW)$. By shrinking $S$ we may assume $\cM=\cO_{S}$. Then ${}^{\t}t_{0}=ct_{r}: \cO_{X\times S}\to {}^{\t}(\cW_{\cE_{0}})$ for a unique invertible function  $c\in \upH^{0}(S,\cO_{S}^{\times})$. Then the fiber of $\pi_{\cW}$ over this $\xi$ is the subscheme of $\GG_{m,S}$ classifying $b\in \cO^{\times}_{S}$ such that $b^{q}=cb$ (for such $b$, $t'_{i}=bt_{i}$ makes the rightmost square of the diagram \eqref{map from F} commutative). Therefore $\pi_{\cW}$ is a $\FF_{q}^{\times}$-torsor.

It remains to show that $\Forg_{\cW}$ is unramified. For this we fix any algebraically closed field $K\supset k$ and a $K$-point $(x_{\bu}, \cE_{\bu},f_{\bu},\io)\in \Sht^{r}_{\cG}(K)$, and show that its fiber under $\Forg_{\cW}$ is finite and reduced over $K$. Let $U_{i}=\upH^{0}(X_{K}, \cW_{\cE_{i}})$ for $0\le i\le r$. Then we have a Frobenius semilinear isomorphism $\phi: U_{0}\isom U_{r}$ induced by $\io$.  All $U_{i}$ lie in the same $F_{K}=K(X)$-vector space  $U_{\y}$ which is the generic fiber of $\cW_{\cE_{0}}$, and $\phi$ extends to an endomorphism of $U_{\y}$. The fiber $\Forg_{\cW}^{-1}(x_{\bu},\cE_{\bu},f_{\bu},\io)$ is the set of $t\in \cap_{i=0}^{r}U_{i}$ such that  $\phi(t)=t$. Let $\Om\subset U_{0}$ be the largest $K$-subspace that is stable under $\phi$. Then $\phi|_{\Om}$ gives a descent datum to a $\FF_{q}$-vector space $\Om_{0}=\Om^{\phi}$ such that $\Om_{0}\ot_{\FF_{q}}K=\Om$. Then $\Om_{0}$ is a finite-dimensional $\FF_{q}$-vector space and $\Forg_{\cW}^{-1}(x_{\bu},\cE_{\bu},f_{\bu},\io)=\Om_{0}$. This proves that $\Forg_{\cW}$ is unramified.
\end{proof}

\subsection{Proof of Theorem \ref{th:main}}
Choose $\cV\in \Rep(\cG)$ so that $b_{\cV}$ is a closed embedding over $U$. Let $\cW=(\cV^{\cH})^{\vee}\ot \cV$.  We factorize the map $\th^{r}: \Sht^{r}_{\cH}\to \Sht^{r}_{\cG}$ as follows
\begin{equation}\label{thr factor}
\xymatrix{\th^{r}: \Sht^{r}_{\cH}\ar[r]^-{b^{\Sht,\c}_{\cV}} & \Sht^{r}_{\cG}(\cW)^{\c} \ar[rr]^-{\Forg_{\cW}} & & \Sht^{r}_{\cG}}
\end{equation}
The map $b^{\Sht,\c}_{\cV}$ sends $(x_{\bu}, \cF_{\bu}, f'_{\bu}, \io')\in \Sht^{r}_{\cH}(S)$ to the tuple $(x_{\bu}, \cE_{\bu}=\cF_{\bu}\times^{\cH}\cG, f_{\bu}=f'_{\bu}\times\id_{\cG}, \io'=\io\times\id_{\cG}, t_{\bu}=\{t_{i}\}_{0\le i\le r})$ where $t_{i}: \cO_{X\times S}\to \cW_{\cE_{i}}=(\cV^{\cH})^{\vee}\ot_{\cO_{X}} \cV_{\cE_{i}}$ is induced from the canonical map $\cV^{\cH}\bt \cO_{S}=(\cV^{\cH})_{\cF_{i}}\to \cV_{\cF_{i}}=\cV_{\cE_{i}}$.

The last map $\Forg_{\cW}$ is schematic, finite and unramified by Proposition  \ref{p:sect fin}. Below we will show that $b^{\Sht,\c}_{\cV}$ is a closed embedding, which then implies that $\th^{r}$ is schematic, finite and unramifed, as desired.
%

To show $b^{\Sht,\c}_{\cV}$ is a closed embedding, it suffices to prove that the map $b^{\Sht}_{\cV}: \Sht^{r}_{\cH}\to \Sht^{r}_{\cG}(\cW)$ is a closed embedding. For this we introduce the ind-stack $\Hk^{r}_{\cG}(\cW)$ to classify $(x_{\bu}, \cE_{\bu}, f_{\bu}, t_{\bu})$ as in $\Sht^{r}_{\cG}(\cW)$ satisfying the same conditions, except that there is  no $\io$.
Then we have a Cartesian square
\begin{equation}\label{ShtGV}
\xymatrix{    \Sht^{r}_{\cG}(\cW)      \ar[r]\ar[d] & \Hk^{r}_{\cG}(\cW)\ar[d]^{(p_{0},p_{r})} \\
\Bun_{\cG}(\cW)\ar[r]^-{(\id,\Fr)} & \Bun_{\cG}(\cW)\times \Bun_{\cG}(\cW)}
\end{equation}

%

Now there is a canonical map from the diagram defining $\Sht^{r}_{\cH}$ (replacing $\cG$ by $\cH$ in \eqref{def Sht}) to the diagram \eqref{ShtGV} by applying the basic map $b^{\Bun}_{\cV}$ to all the $\cH$-torsors. We denote the map on the upper right corner by $b^{\Hk}_{\cV}: \Hk^{r}_{\cH}\to \Hk^{r}_{\cG}(\cW)$; the resulting map on the upper left corner is the map $b^{\Sht}_{\cV}$ we introduced before. By Proposition  \ref{l:bV par}, $b^{\Bun}_{\cV}$ is a closed embedding. In the next lemma we shall show that $b^{\Hk}_{\cV}$ is also a closed embedding. This implies that  $b^{\Sht}_{\cV}$ is also  a closed embedding, being the fiber product of closed embeddings over a closed embedding. This finishes the proof. 
\qed

\begin{lemma}\label{l:bV Hk}
The map $b^{\Hk}_{\cV}$ is a closed embedding.
\end{lemma}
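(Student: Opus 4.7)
The plan is to factor $b^{\Hk}_{\cV}$ as a composition
$$\Hk^{r}_{\cH} \xr{\alpha} \cY \xr{\beta} \Hk^{r}_{\cG}(\cW),$$
where
$$\cY := \Hk^{r}_{\cG}(\cW) \times_{\Bun_{\cG}(\cW)^{r+1},\,(p_{0},\ldots,p_{r})} \Bun_{\cH}^{r+1}$$
is the ind-stack classifying tuples $(x_{\bu}, \cE_{\bu}, f_{\bu}, t_{\bu}, \cF_{\bu})$ with $(x_{\bu}, \cE_{\bu}, f_{\bu}, t_{\bu})\in\Hk^{r}_{\cG}(\cW)$ and each $\cF_{i}$ an $\cH$-reduction of $\cE_{i}$ satisfying $b^{\Bun}_{\cV}(\cF_{i})=(\cE_{i},t_{i})$. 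The map $\alpha$ sends $(x_{\bu},\cF_{\bu},f'_{\bu})$ to its induced $\cG$-Hecke data with canonical sections, together with the $\cH$-torsors $\cF_{\bu}$; the map $\beta$ forgets the $\cF_{\bu}$.

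The map $\beta$ is a closed embedding, being the base change along $(p_{0},\ldots,p_{r})$ of $(b^{\Bun}_{\cV})^{r+1}:\Bun_{\cH}^{r+1}\to\Bun_{\cG}(\cW)^{r+1}$, a product of closed embeddings by Proposition \ref{l:bV par}.

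The main step is to show $\alpha$ is a closed embedding, which I plan to do by verifying $\alpha$ is both a monomorphism and satisfies the valuative criterion for properness, then invoking the standard fact that a proper monomorphism of (ind-)algebraic stacks is a closed embedding. For the \emph{monomorphism}, given two $S$-points of $\Hk^{r}_{\cH}$ with the same image in $\cY$, the $\cF_{\bu}$ are already determined by the $\cY$-data, and each $\cH$-modification $f'_{i}$ is uniquely determined by its induced $\cG$-modification $f_{i}$, since $\cH\incl\cG$ being a closed embedding gives that $\Isom_{\cH}(\cF_{i-1},\cF_{i})|_{V_{i}}\incl\Isom_{\cG}(\cE_{i-1},\cE_{i})|_{V_{i}}$ is a closed embedding (in particular injective on sections), where $V_{i}=X\times S\bs\G(x_{i})$. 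For the \emph{valuative criterion}, let $R$ be a DVR with fraction field $K$, and consider an $R$-point of $\cY$ together with a $K$-lift to $\Hk^{r}_{\cH}$; one must extend each $\cH$-modification $f'_{i}$ from $V_{i}|_{K}$ to $V_{i}$. The known $\cG$-map $f_{i}:V_{i}\to\Isom_{\cG}(\cE_{i-1},\cE_{i})|_{V_{i}}$ factors through the closed subscheme $\Isom_{\cH}(\cF_{i-1},\cF_{i})|_{V_{i}}$ on the generic fiber. The key observation is that $V_{i}$ is flat over $R$ (being open in $X\times\Spec R$), so $\cO_{V_{i}}$ is $R$-torsion-free; the ideal sheaf of $f_{i}^{-1}(\Isom_{\cH})\subset V_{i}$ is then a torsion-free subsheaf of $\cO_{V_{i}}$ that vanishes after $\otimes_{R}K$, and hence is zero. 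Thus $f_{i}$ factors scheme-theoretically through $\Isom_{\cH}$ on all of $V_{i}$, giving the $R$-extension of $f'_{i}$; uniqueness of the extension follows by the same flatness argument applied to the equalizer of two potential lifts.

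I expect the main subtlety to be the scheme-theoretic handling in the valuative criterion: the naive set-theoretic argument (``two morphisms agreeing on a dense open are equal'') would fail on non-reduced bases, but the $R$-flatness of $V_{i}$ upgrades the argument to scheme-theoretic equality on all of $V_{i}$. Combining, $\alpha$ is a proper monomorphism, hence a closed embedding, and so $b^{\Hk}_{\cV}=\beta\circ\alpha$ is too.
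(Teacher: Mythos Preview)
Your factorization $b^{\Hk}_{\cV} = \beta \circ \alpha$ through the fiber product $\cY$ coincides with the paper's setup, and your observation that $\beta$ is a closed embedding by base change of Proposition~\ref{l:bV par} is correct. The content indeed lies in $\alpha$.

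The gap is that the valuative criterion does not by itself yield properness: one also needs $\alpha$ to be of finite type (equivalently quasi-compact, since a monomorphism is automatically separated), and you have not verified this. Without it, the implication ``proper monomorphism $\Rightarrow$ closed embedding'' cannot be invoked. (In general a monomorphism of schemes satisfying the valuative criterion need not have closed image: take an infinite disjoint union of closed points mapping to an irreducible curve.) Verifying finite type for $\alpha$ amounts to checking that bounded pieces of $\Hk^{r}_{\cG}(\cW)$ pull back to bounded pieces of $\Hk^{r}_{\cH}$, which is not supplied and not obviously easier than the lemma itself.

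The paper bypasses this by proving that $\alpha$ is actually an \emph{isomorphism}, so the finite-type issue evaporates. The point you are not exploiting is that an $S$-point of $\cY$ already carries the section data $t_{\bu}$ together with the compatibility $f_{i}(t_{i-1})=t_{i}$, for \emph{every} test scheme $S$, not only when a $K$-lift is assumed. Concretely: the two $\cH$-reductions of $\cE:=\cE_{i-1}|_{V_{i}}\cong\cE_{i}|_{V_{i}}$ coming from $\cF_{i-1}$ and $\cF_{i}$ define sections $\xi_{i-1},\xi_{i}:V_{i}\to\cE/\cH$, and composing with the map $\cE/\cH\to(\cV^{\cH})^{\vee}\ot\cV_{\cE}$ induced by $b_{\cV}$ returns $t_{i-1}|_{V_{i}}=t_{i}|_{V_{i}}$. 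Since $b_{\cV}$ is a closed embedding over $U$, this forces $\xi_{i-1}=\xi_{i}$ on the schematically dense open $(U\times S)\cap V_{i}$, hence on all of $V_{i}$ by separatedness of $\cE/\cH$. Thus every $S$-point of $\cY$ lifts uniquely to $\Hk^{r}_{\cH}$. Your flatness argument correctly extends an $\cH$-lift across a DVR, but it trades this intrinsic section compatibility for the extrinsic hypothesis of a $K$-lift, which is why it only reaches the valuative-criterion conclusion rather than the full isomorphism.
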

\begin{proof}
We have a commutative diagram
\begin{equation}\label{HkW}
\xymatrix{\Hk^{r}_{\cH}\ar[r]^-{b^{\Hk}_{\cV}}\ar[d]^{(p_{i})_{0\le i\le r}} & \Hk^{r}_{\cG}(\cW)\ar[d]^{(p_{i})_{0\le i\le r}}\\
(\Bun_{\cH})^{r+1}\ar[r]^-{\prod b^{\Bun}_{\cV}} & \Bun_{\cG}(\cW)^{r+1}}
\end{equation}
We claim that this diagram is Cartesian, which would prove that $b^{\Hk}_{\cV}$ is a closed embedding, since $b^{\Bun}_{\cV}$ is by Proposition  \ref{l:bV par}.


Let $\Hk^{r, \cW}_{\cH}$ be the fiber product $\Hk^{r}_{\cG}(\cW)$ and $(\Bun_{\cH})^{r+1}$ over $\Bun_{\cG}(\cW)^{r+1}$ using the maps in the above diagram.  By definition, an $S$-point of $\Hk^{r, \cW}_{\cH}$ is a tuple $(x_{\bu}, \cF_{\bu}, f_{\bu}, t_{\bu})$ where, denoting $\cE_{i}=\cF_{i}\times^{\cH}_{X}\cG$, $f_{i}: \cE_{i-1}|_{X\times S\bs \G(x_{i})}\isom \cE_{i}|_{X\times S\bs \G(x_{i})}$ is a $\cG$-isomorphism and intertwines $t_{i-1}$ and $t_{i}$. We would like to show that $f_{i}$ comes from a unique isomorphism of $\cH$-torsors  $f'_{i}: \cF_{i-1}|_{X\times S\bs \G(x_{i})}\isom \cF_{i}|_{X\times S\bs \G(x_{i})}$.  

Let $Y=X\times S\bs \G(x_{i})$. 
Write $\cE$ for the $\cG$-torsor $\cE_{i-1}|_{Y}\cong \cE_{i}|_{Y}$ (identified via $f_{i}$) over $Y$. The algebraic space $\cE/\cH$ over $Y$ classifies reductions of $\cE$ to $\cH$ (over test $Y$-schemes). In particular, $\cF_{i-1}|_{Y}$ and $\cF_{i}|_{Y}$ give two $\cH$-reductions of $\cE$ to $\cH$ hence define two sections $\xi_{i-1}, \xi_{i}: Y\to \cE/\cH$. The map $b_{\cV}$ induces a map 
\begin{equation*}
\b: \cE/\cH\to (\cV^{\cH})\ot_{\cO_{X}} \cV_{\cE}
\end{equation*}
where $\cV_{\cE}$ is the vector bundle over $Y$ associated to $\cE$ and $\cV$. The compositions $\b\c\xi_{i-1}$ and $\b\c\xi_{i}$ are the sections $t_{i-1}|_{Y}$ and $t_{i}|_{Y}$ respectively. By assumption, $t_{i-1}|_{Y}$ and $t_{i}|_{Y}$ are identified via $f_{i}$, hence
\begin{equation}\label{bx}
\b\c\xi_{i-1}=\b\c\xi_{i}.
\end{equation}
Recall that $\cV$ is chosen so that $b_{\cV}$ is a closed embedding over the open curve $U$, hence $\b$ is a closed embedding over $Y'=(U\times S) \cap Y$ (inside $X\times S$). Then \eqref{bx} implies that $\xi_{i-1}|_{Y'}=\xi_{i}|_{Y'}$. Since $Y'$ is dense in $Y$ and $\cE/\cH$ is separated over $Y$ (for $\cH\incl \cG$ is closed), we must have $\xi_{i-1}=\xi_{i}$, i.e., the equality of the $\cH$-reductions $\cF_{i-1}|_{Y}$ and $\cF_{i}|_{Y}$ of $\cE=\cE_{i-1}|_{Y}\cong \cE_{i}|_{Y}$. This shows that \eqref{HkW} is Cartesian and finishes the proof.

\end{proof}

\end{document}